\documentclass[a4paper,12pt,oneside]{article}
\usepackage{amsmath, amsthm, amssymb, hyperref}

\usepackage{geometry}
\usepackage{enumitem}
\setlist{noitemsep}

\usepackage{listings}
\usepackage{xcolor}

\definecolor{codeblue}{rgb}{0.3,0.3,0.6}
\definecolor{codegreen}{rgb}{0,0.6,0}
\definecolor{codegray}{rgb}{0.5,0.5,0.5}
\definecolor{codepurple}{rgb}{0.58,0,0.82}
\definecolor{backcolour}{rgb}{0.95,0.95,0.92}

\lstdefinestyle{mystyle}{
    backgroundcolor=\color{backcolour},   
    commentstyle=\color{codegreen},
    keywordstyle=\color{codeblue},
    numberstyle=\tiny\color{codegray},
    stringstyle=\color{codepurple},
    basicstyle=\ttfamily\footnotesize,
    breakatwhitespace=false,         
    breaklines=true,                 
    captionpos=b,                    
    keepspaces=true,                 
    numbers=left,                    
    numbersep=5pt,                  
    showspaces=false,                
    showstringspaces=false,
    showtabs=false,                  
    tabsize=2
}

\usepackage{amsthm,amsmath,amsfonts,amssymb}
\usepackage{xcolor}
   \usepackage{geometry}
\usepackage{authblk}

\newgeometry{vmargin={28mm}, hmargin={20mm,17mm}}   

\makeatletter

\hypersetup{
    colorlinks,
    linkcolor={red},
    citecolor={green},
    urlcolor={blue}
}

\newtheorem{theorem}{Theorem}
\newtheorem{lem}[theorem]{Lemma}

\newtheorem{prop}[theorem]{Proposition}
\newtheorem{rem}[theorem]{Remark}

\newtheorem{maintheorem}{Theorem}

\usepackage{verbatim}
\usepackage{color}
\usepackage{authblk}
\hypersetup{
    colorlinks,
    linkcolor={orange},
    citecolor={magenta},
    urlcolor={blue}
}

\title{Finite groups with exactly $2p+1$ subgroups of order $p$}

\author[1]{Ihechukwu Chinyere\thanks{Email: \texttt{i.chinyere@up.ac.za}}}
\author[2]{Bernard O. Bainson\thanks{Corresponding author: \texttt{bboduoku@knust.edu.gh}}}
\affil[1]{Department of Mathematics and Applied Mathematics, University of Pretoria, Hatfield 0028, Pretoria, South Africa}
\affil[2]{Department of Mathematics, Kwame Nkrumah University of Science and
Technology, Ghana}
\begin{document}
\maketitle
\begin{abstract}
Let $\text{m}_p(G)$ be the number of elements of order $p$ in a finite group $G$, $p$ prime. We prove that there is no finite simple group $G$ with $\text{m}_p(G)=2p^2-p-1$, and that a finite group with this property exists if and only if $2p+1$ is prime or $3^f$ with $f$ an odd prime. This concerns an open problem and extends the classification of finite groups with few elements of prime order.
\end{abstract}

\maketitle

\noindent\textbf{2010 Mathematics Subject Classification.} 20D05, 20D60.\\
\noindent\textbf{Keywords:} Finite groups, finite simple groups, elements of prime order.

\section{Introduction}\label{sec:intro}

Let $G$ be a finite group and $p$ a prime. We write $\text{m}_p(G)$ for the number of elements of order $p$ in $G$ and $\text{s}_p(G)$ for the number of subgroups of order $p$. Since distinct subgroups of order $p$ intersect trivially,
\[
\text{m}_p(G)=(p-1)\,\text{s}_p(G).
\]
By Frobenius' theorem \cite{Frobenius95}, if $p$ divides $|G|$ then $\text{s}_p(G)\equiv 1 \pmod p$. Hence, if $p\mid|G|$, the possible values of $\text{m}_p(G)$ are $(p-1)(kp+1)$, $k=0,1,2,\dots$, namely,
\[
p-1,\quad p^2-1,\quad 2p^2-p-1,\quad 3p^2-2p-1,\ \dots
\]
(If $p\nmid|G|$, then $\text{m}_p(G)=0$.) The first value, $k=0$, occurs exactly when $G$ has a unique subgroup of order $p$, which is then normal; among simple groups this occurs only for $G\cong\mathbb Z_p$, since a simple nonabelian group has no normal subgroup of order $p$. The second value, $k=1$, is achieved by $\mathrm{PSL}_2(p)$ ($p\ge5$) and only by it among simple groups \cite{Asboei26}. For $p=2$, larger values of $k$ already occur among simple groups: $\mathrm{SL}_2(2^n)$, simple for $n\ge 2$, has exactly $2^{2n}-1=(p-1)(kp+1)$ involutions with $k=2^{2n-1}-1$. Asboei \cite{Asboei26} asked for a classification of finite groups with $\text{m}_p(G)=(p-1)(kp+1)$ for $k\ge 2$. We treat $k=2$. Note that $2p^2-p-1=(p-1)(2p+1)$, so condition $\text{m}_p(G)=2p^2-p-1$ is equivalent to $\text{s}_p(G)=2p+1$.

\begin{maintheorem}\label{main}
Let $p$ be a prime. The following are equivalent.
\begin{enumerate}[label=(\alph*)]
\item There is a finite group $G$ with $\text{m}_p(G)=2p^2-p-1$.
\item $2p+1$ is a prime power.
\item $2p+1$ is prime, or $2p+1=3^f$ with $f$ an odd prime.
\end{enumerate}

\end{maintheorem}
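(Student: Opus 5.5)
We prove (b)$\Leftrightarrow$(c) by elementary number theory, (c)$\Rightarrow$(a) by an explicit construction, and (a)$\Rightarrow$(b) by locating a group acting transitively on the $2p+1$ subgroups of order $p$.

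\textbf{(b)$\Leftrightarrow$(c).} Suppose $2p+1=q^f$ with $q$ prime and $f\ge 2$. Since $2p+1$ is odd, $q$ is odd, and
\[
2p=q^f-1=(q-1)(q^{f-1}+\dots+1).
\]
Both factors are even when $f$ is even, since then the second factor is a sum of an even number of odd terms. Their product would then be divisible by $4$, which is impossible for $p$ odd; for $p=2$ we get $5=q^f$, forcing $f=1$. So $f$ is odd. Now $q-1\ge 2$ is even and divides $2p$, so $q-1\in\{2,2p\}$. The case $q-1=2p$ gives $q^f=2p+1=q$, so $f=1$. Hence $q=3$ and $p=(3^f-1)/2$. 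If $f=ab$ with $1<a<f$, then $3^a-1\mid 3^f-1$ and $(3^a-1)/2$ is a proper divisor of $p$ larger than $1$, contradicting primality. So $f$ is prime, and the converse direction is trivial.

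\textbf{(c)$\Rightarrow$(a).} Put $q=2p+1$, a prime power, so $p\mid q-1$. Let $F=\mathbb F_q$ and consider $G=F\rtimes C$, where $C\le F^\times$ is the subgroup of order $p$. Then $G$ is a Frobenius group with kernel $F$ of order $q$, coprime to $p$, and complement of order $p$. Its Sylow $p$-subgroups are cyclic of order $p$, and there are $|G:N_G(C)|=q=2p+1$ of them. Every subgroup of order $p$ is Sylow, so $\text{s}_p(G)=2p+1$. For $p=2$ the group is $D_{10}$, which has $5$ involutions.

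\textbf{(a)$\Rightarrow$(b), the main step.} Let $G$ be a minimal counterexample, i.e.\ a group with $\text{s}_p(G)=2p+1$, and let $\Omega$ be its set of subgroups of order $p$. Replacing $G$ by the subgroup $H=\langle\Omega\rangle$ keeps $\Omega$ unchanged. Let $P\in\mathrm{Syl}_p(G)$ act on $\Omega$ by conjugation.

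If $p^2\mid |P|$, we count order-$p$ subgroups inside $P$. A $p$-group that is neither cyclic nor, for $p=2$, generalized quaternion has $\text{s}_p\ge p+1$, and $\text{s}_p\equiv 1\pmod p$. The elementary abelian case of rank $2$ gives exactly $p+1$, and rank $\ge 3$ gives at least $p^2+p+1>2p+1$. So inside $P$ we have $\text{s}_p(P)\in\{1,p+1\}$, and the remaining subgroups in $\Omega$ lie in other Sylow subgroups.

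I expect this case analysis to be the main obstacle: excluding the possibility that $P$ is cyclic, or that the $2p+1$ subgroups split as $1+p+p$ or $(p+1)+p$ under $P$. The plan is to consider the conjugation action of $G$ on $\Omega$. Its kernel is $K=\bigcap_{X\in\Omega} N_G(X)$, and the orbit sizes partition $2p+1$ with each orbit size $\equiv$ index of a normalizer. If some subgroup $X\in\Omega$ were normal in $G$, pass to the quotient and the centralizer of $X$, then apply Frobenius counting to reduce. Otherwise, take a $G$-orbit of size coprime to $p$ that is not $1$. Since $2p+1-1$ and the $P$-orbits have sizes $1$ or $p$, I expect to show $G$ is transitive on $\Omega$ of degree $2p+1$, with $P$ of order $p$ having exactly one fixed point.

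So $G/K$ is a transitive group of degree $2p+1$ in which an element of order $p$ has two $p$-cycles and one fixed point. I would then use the classification of such groups: primitive groups of degree $2p+1$ containing an element with this cycle type. For the solvable case, primitive solvable groups have prime power degree, which gives (b) directly. For the almost simple case, I would invoke the classification of finite simple groups, in the spirit of the paper's first claim that no simple $G$ has $\text{m}_p(G)=2p^2-p-1$, and rule out each family. The imprimitive case, with blocks of size $d\mid 2p+1$, should be handled by induction on the block action.
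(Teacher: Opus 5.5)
Your arguments for (b)$\Leftrightarrow$(c) and (c)$\Rightarrow$(a) are correct and essentially match the paper's (Proposition~\ref{arith} and the Frobenius group $\mathbb F_q\rtimes\mathbb Z_p$). The direction (a)$\Rightarrow$(b), however, is not proved: both of its load-bearing steps are missing.

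The first missing step is $|G|_p=p$. You flag it as ``the main obstacle'' and leave it open. Once it holds, transitivity on $\Omega$ follows immediately from Sylow's theorem, so the rest of that part of your plan reduces to this step. In the paper this step needs real input:
\begin{itemize}
\item Kulakoff's theorem gives $\text{s}_p(P)=p+1$ when $|P|\ge p^2$ ($p$ odd).
\item Lemma~\ref{lem:sylow}(ii) excludes normal subgroups of order $p$.
\item Lemma~\ref{lem:Q} gives $\text{s}_p(G/N)\in\{1,2p+1\}$ for every normal $p'$-subgroup $N$.
\item A minimal-counterexample argument (Proposition~\ref{Gp}) then reduces to the simple case, which is Proposition~\ref{prop:Sp}.
\end{itemize}
Your sketch (``pass to the quotient and the centralizer of $X$, then apply Frobenius counting'') contains none of this. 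Separately, your claim $\text{s}_p(P)\in\{1,p+1\}$ is false for $p=2$: for example $\text{s}_2(D_8)=5$. The case $p=2$ should simply be dismissed, since $5$ is prime.

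The second gap means the plan cannot succeed as written. You propose to feed the classification only this data: a primitive group of degree $2p+1$ containing an element of cycle type $p^2 1^1$. That does not exclude the almost simple case, even if you add $|G/K|_p=p$. The following groups are all $2$-transitive of degree $2p+1$, have Sylow $p$-subgroups of order $p$, and contain elements of exactly this cycle type:
\begin{itemize}
\item $\mathrm{PSL}_3(2)$ on $7$ points;
\item $\mathrm{PSL}_2(11)$ and $M_{11}$ on $11$ points;
\item $A_7$ and $A_8\cong\mathrm{PSL}_4(2)$ on $15$ points;
\item $M_{23}$ on $23$ points.
\end{itemize}

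What rules them out is information you never use: the point stabilizer is $\mathrm{N}(X)$ for a subgroup $X$ of order $p$. From this the paper proves $|\mathrm{N}_S(P)|\le 2p(p-1)$ (Lemma~\ref{bound}, via the centralizer of a full cycle), hence $|S|\le\frac12 n(n-1)(n-3)$. It also proves $2$-transitivity by the tournament/involution argument (Lemma~\ref{2trans}). Only then is the list of $2$-transitive groups applied, and it is the order bound that eliminates the groups above.

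You also give no mechanism for passing from an almost simple $G/K$ to a statement about a simple group. The paper avoids O'Nan--Scott entirely:
\begin{itemize}
\item With $K=\mathrm{O}_{p'}(G)$, it shows $PK\trianglelefteq G$ (Lemma~\ref{lem:E1}, applying Theorem~\ref{simple} to the unique nonabelian minimal normal subgroup of $G/K$).
\item Hence $2p+1=|K:\mathrm{C}_K(P)|$.
\item Coprime action then shows that each prime $\ell$ contributing to this index contributes a factor $\ell^{a_\ell}\equiv1\pmod p$, hence at least $p+1$, so only one prime can occur.
\end{itemize}

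By contrast, the imprimitive case you defer is easy once $|P|=p$. There are at most $(2p+1)/3<p$ blocks, so $P$ fixes every block. The block containing $X$ would then have size $\equiv1\pmod p$, while the others have size $\equiv0\pmod p$, which is impossible.
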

Theorem~\ref{main} is complemented by the following result, which shows that unlike the cases $k=0, 1$, the groups in (a) are never simple.

\begin{maintheorem}\label{simple}
Let $p$ be a prime. There is no finite simple group $S$ with $\text{m}_p(S)=2p^2-p-1$.
\end{maintheorem}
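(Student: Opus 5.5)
We argue by contradiction. Suppose $S$ is a finite simple group with $\text{s}_p(S)=2p+1$. Since $\text{s}_p(S)\neq 1$, $S$ is nonabelian, so $|S|$ is even and $S$ is not cyclic of order $p$. The basic tool is the conjugation action of $S$ on the set $\Omega$ of its $2p+1$ subgroups of order $p$.

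\emph{Step 1: reduction to small permutation groups.} The action of $S$ on $\Omega$ is nontrivial. Otherwise every subgroup of order $p$ would be normal, contradicting simplicity. Since $S$ is simple, the action is faithful, so $S$ embeds in $\mathrm{Sym}(2p+1)$; by simplicity the image lies in $\mathrm{Alt}(2p+1)$. Let $P\in\mathrm{Syl}_p(S)$.
\begin{itemize}
\item The order of $P$ is bounded by the $p$-part of $(2p+1)!$, which is $p^2$ for $p\ge 3$ (from $p$ and $2p$).
\item If $|P|=p^2$ with $P$ elementary abelian, then $P$ alone contains $p+1$ subgroups of order $p$. Comparing the orbit sizes of $P$ on $\Omega$, which are $1$ or $p$, with the fixed points of each element of order $p$ should pin down the structure.
\end{itemize}
For $p=2$ the hypothesis is $5$ involutions. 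The smallest nonabelian simple group $A_5$ already has $15$, and every nonabelian simple group has more than $5$ involutions (for instance, by Brauer--Fowler-type counting, or simply because $S\le A_5$ forces $S=A_5$). Hence $p$ is odd.

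\emph{Step 2: analyse the action via orbit counting.} Conjugation by $P$ fixes $P$'s own subgroups of order $p$, together with any other normalized ones. Frobenius' congruence gives $2p+1\equiv 1\pmod p$. The orbits of $S$ on $\Omega$ are the conjugacy classes of subgroups of order $p$. Each class has size $|S:N_S(Q)|$, and this size must divide into or sum to $2p+1$. In particular:
\begin{itemize}
\item A single class of size $2p+1$ means $S$ acts transitively of degree $2p+1$, with point stabilizer $N_S(Q)$ of index $2p+1$.
\item Otherwise there are several classes, each of size at least the minimal index of a proper subgroup of $S$.
\end{itemize}
Since the minimal faithful degree of $S$ is at most $2p+1$ while $p$ divides $|S|$, the simple groups $S\le A_{2p+1}$ with $p\mid |S|$ are very restricted.

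\emph{Step 3: case analysis using primitive groups.} The case of a single class of size $2p+1$ is the main case, and I expect it to be the main obstacle. Here $S$ is a transitive simple subgroup of $A_{2p+1}$ whose order is divisible by $p$. An element $x$ of order $p$ has cycle type $p$ or $p^2$ on the $2p+1$ points, so it fixes $p+1$ or $1$ points. The fixed points of $x$ in $\Omega$ are exactly the subgroups $Q'$ normalized by $\langle x\rangle$. These include $\langle x\rangle$ itself and the other order-$p$ subgroups of $C_S(x)$ or of the $p$-subgroup $\langle x,Q'\rangle$.

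I would use the classification of simple groups of degree $2p+1$, or Jones' list of primitive groups containing a $p$-cycle with $p>n/2$ when $x$ is a $p$-cycle. This list contains:
\begin{itemize}
\item $A_{2p+1}$ itself, whose number of $p$-subgroups is far larger than $2p+1$;
\item $\mathrm{PSL}_d(q)$ acting on points, with $2p+1=(q^d-1)/(q-1)$;
\item $\mathrm{PSL}_2(11)$ of degree $11$ and $M_{11}$, $M_{23}$, $M_{24}$.
\end{itemize}
In each candidate I count the subgroups of order $p$ directly and check that the count exceeds $2p+1$. For example, in $\mathrm{PSL}_2(q)$ with $p\mid q\pm1$, $p$ odd, the count is $q(q\mp1)/2$. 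The residual subcase, where $x$ acts with cycle type $p^2$, gives $|P|=p^2$. Then $P$ has $p+1\ge 4$ subgroups of order $p$, and Sylow counting yields $\text{s}_p(S)\ge |S:N_S(P)|\,(p+1)-\text{overlaps}$, which I would show exceeds $2p+1$.

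When there are several $S$-classes, each class has size less than $2p+1$. So $S$ has a faithful action of degree at most $2p-2$ that still has $p\mid |S|$, which forces $p$-cycles again and returns us to the same list with smaller degree. Each candidate again yields too many subgroups of order $p$, a contradiction.
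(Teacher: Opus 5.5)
There is a genuine gap, and it sits in the case you call residual. In Step~3 you have the link between cycle type and $|P|$ backwards.

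If $|P|=p$, then $\langle x\rangle$ is a Sylow subgroup, so it normalizes no other subgroup of order $p$, since that would give a $p$-group of order $p^2$. Hence $x$ fixes only the point $\langle x\rangle$ of $\Omega$ and acts as a product of two $p$-cycles. Conversely, a $p$-cycle fixes $p+1$ points, which forces $|P|\ge p^2$. So your claim ``cycle type $p^2$ gives $|P|=p^2$'' is false: elements of order $5$ in $\mathrm{PSL}_2(11)$ acting on $11$ points have type $5^2 1$, yet $|\mathrm{PSL}_2(11)|_5=5$.

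The subcase you dispose of with a vague Sylow count is therefore exactly the hard one: $S$ transitive of degree $2p+1$, $|S|_p=p$, and every element of order $p$ of type $p^21$. The $p$-cycle case, where you invoke the list of primitive groups containing a cycle, is the easy one. There $|P|=p^2$ and $P$ is elementary abelian, since no element of order $p^2$ fits on $2p+1$ points. Then $P$ fixes exactly its own $p+1$ subgroups and moves the remaining $p$ points as a single orbit. So some $1\ne y\in P$ acts trivially on $\Omega$, contradicting faithfulness; this is how the paper gets $|S|_p=p$. Your several-classes case also closes at once: some class has size at most $p$ and $S$ acts faithfully on it, so $|S|_p\le p$ and all subgroups of order $p$ are conjugate.

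For the real case you offer only ``the classification of simple groups of degree $2p+1$''. That is not a finite list you can quote. What is missing is a structural reduction that makes some classification applicable, and the paper supplies two.

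First, put $P=\langle x\rangle$, $H=\mathrm{N}_S(P)$ and $C=\mathrm{C}_S(P)$. The subgroup of $C$ preserving each of the two $P$-orbits on $\Omega\setminus\{P\}$ embeds in $\mathbb{Z}_p^2$, because a permutation commuting with a full cycle is a power of it. So that subgroup equals $P$, which gives $|H|\le 2p(p-1)$ and $|S|\le\tfrac12 n(n-1)(n-3)$.

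Second, $S$ is $2$-transitive on $\Omega$. Otherwise the two $H$-orbits define an $S$-invariant relation. It cannot be symmetric, since a $p$-regular graph on $2p+1$ vertices would have odd degree sum. Hence it is a tournament, and no involution of $S$ can preserve a tournament while acting nontrivially.

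Only then does the classification of $2$-transitive groups give a short list. Oddness of $n$, elementary arithmetic, $|S|_p=p$ and the order bound eliminate every entry. Without an ingredient of this kind, your Step~3 never reaches a checkable list.
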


Theorem~\ref{simple} is the heart of the paper.

\medskip
\noindent\textbf{Outline.}
Section~\ref{sec:prelim} fixes the notation and proves two general lemmas valid for groups satisfying $\text{s}_p(G)=2p+1$. Section~\ref{sec:simple} shows that a nonabelian simple group $S$ with $\text{s}_p(S)=2p+1$ acts faithfully and $2$-transitively on $2p+1$ points, with $|S|_p=p$ and $|S|$ small. Section~\ref{sec:thmB} uses the classification of $2$-transitive groups to complete the proof of Theorem~\ref{simple}. Section~\ref{sec:general} uses Theorem~\ref{simple} to control arbitrary groups with $\text{s}_p(G)=2p+1$: the Sylow $p$-subgroups have order $p$, $\mathrm{O}_{p'}(G)$ together with a Sylow $p$-subgroup is normal, and $2p+1$ is a prime power. Section~\ref{sec:proof} contains the arithmetic of prime powers $2p+1$, the construction of examples, and the proof of Theorem~\ref{main}.

\section{Notation and preliminaries}\label{sec:prelim}

All groups are finite, and $p$ denotes a prime. For a group $G$ let $\Omega(G)$ be the set of subgroups of order $p$ of $G$, so $\text{s}_p(G)=|\Omega(G)|$; we write $\Omega=\Omega(G)$ when $G$ is clear. The group $G$ acts on $\Omega$ by conjugation, $g\cdot Q=gQg^{-1}$, and the stabilizer of $Q$ is $\mathrm{N}_G(Q)$. Write $|G|_p$ for the largest power of $p$ dividing $|G|$, $\mathrm{Syl}_p(G)$ for the set of Sylow $p$-subgroups of $G$, and $\mathrm{O}_{p'}(G)$ for the largest normal $p'$-subgroup of $G$, where $p'$ means that the order of the subgroup is not divisible by $p$. 

\medskip
In this section, we record a remark on fixed points and two lemmas: the first holds for $\text{s}_p(G)=kp+1$ with $k\in\{1,2\}$, the second for $\text{s}_p(G)=kp+1$ with $1\le k<p$ ($p$ odd); both are applied below with $\text{s}_p(G)=2p+1$.

\begin{rem}\label{rem:fixed}
Let $P\in\mathrm{Syl}_p(G)$ and $Q\in\Omega$. If $P$ normalizes $Q$, then $PQ$ is a $p$-group containing $P$, so $PQ=P$ and $Q\le P$. Hence,  the fixed points of $P$ on $\Omega$ are exactly the normal subgroups of order $p$ of $P$; if $P$ is abelian, these are all the subgroups of order $p$ of $P$.
\end{rem}

The first lemma describes how $\text{s}_p$ behaves under passage to a quotient by a normal $p'$-subgroup.

\begin{lem}\label{lem:Q}
Let $\text{s}_p(G)=kp+1$ with $k\in\{1,2\}$, and let $N\trianglelefteq G$ with $p\nmid|N|$. Then $\text{s}_p(G/N)\in\{1,kp+1\}$.
\end{lem}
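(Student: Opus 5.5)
The plan is to compare subgroups of order $p$ in $G$ and in $\bar G=G/N$ through the natural map $Q\mapsto QN/N$, and then to use Frobenius' congruence in $\bar G$ to force the count into $\{1,kp+1\}$. Since $p\mid|G|$ and $p\nmid|N|$, we have $p\mid|\bar G|$, so Frobenius gives $\text{s}_p(\bar G)\equiv 1\pmod p$.

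\emph{The map is onto.} Let $\bar R\le \bar G$ have order $p$, and let $R$ be its full preimage, so $|R|=p|N|$. By Sylow's theorem $R$ contains a subgroup $Q$ of order $p$. Then $QN=R$, because $Q\cap N=1$ and the orders agree. Hence $QN/N=\bar R$, and the map from $\Omega(G)$ to $\Omega(\bar G)$ is surjective. This gives $\text{s}_p(\bar G)\le \text{s}_p(G)=kp+1$.

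\emph{Only two values survive.} Combining the bound with the congruence, $\text{s}_p(\bar G)\in\{1,p+1,\dots,kp+1\}$. For $k=1$ this is already the claim. For $k=2$ the only value left to exclude is $\text{s}_p(\bar G)=p+1$, and this is the main obstacle.

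\emph{Counting fibres.} The fibre over $\bar R$ is the set of subgroups of order $p$ in $R=NQ$, which are exactly the Sylow $p$-subgroups of $R$. By Sylow and the Frattini argument their number is $|R:\mathrm{N}_R(Q)|=|N:\mathrm{N}_N(Q)|$. This number is $\equiv1\pmod p$ and divides $|N|$. Summing over the fibres,
\[
2p+1=\sum_{\bar R\in\Omega(\bar G)} n_{\bar R},\qquad n_{\bar R}\equiv 1\pmod p .
\]
Suppose $\text{s}_p(\bar G)=p+1$. Write $n_{\bar R}=1+pa_{\bar R}$. Then $p+1+p\sum a_{\bar R}=2p+1$, so $\sum a_{\bar R}=1$. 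Thus exactly one fibre has size $p+1$ and the remaining $p$ fibres have size $1$.

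\emph{Deriving a contradiction.} Conjugation by $G$ permutes the fibres compatibly with the action of $\bar G$ on $\Omega(\bar G)$. So the $G$-orbit of the unique large fibre's image $\bar R_0$ consists of points with fibre size $p+1$. Since only one fibre has that size, $\bar R_0$ is fixed by $\bar G$, i.e.\ $\bar R_0\trianglelefteq\bar G$.

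Now take a Sylow $p$-subgroup $\bar P$ of $\bar G$. Since $\bar R_0$ is a normal $p$-subgroup, $\bar R_0\le\bar P$. The remaining $p$ points of $\Omega(\bar G)$ form a $\bar G$-invariant set. It is nonempty, so $\bar P$ does not act transitively on all of $\Omega(\bar G)$. The remaining $p$ points also lie in $\bar G$-orbits of size coprime to\,\dots\ The cleaner route is to pass to $\bar R_0$. It acts on the other $p$ subgroups of order $p$ by conjugation, and its fixed points among them are, by Remark~\ref{rem:fixed}, subgroups $\bar S$ with $\bar R_0\bar S$ a $p$-group of order $p^2$.

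So I would instead argue inside $\bar G$ with the ordinary Frobenius-type count. The subgroups of order $p$ other than $\bar R_0$ number $p$. Applying Frobenius to $\mathrm{C}_{\bar G}(\bar R_0)$, and using the $p$-group $\bar P$ acting on $\Omega(\bar G)$, we have: the number of fixed points of $\bar P$ is $\equiv p+1\equiv1\pmod p$, and these fixed points are the normal order-$p$ subgroups of $\bar P$. The aim is then to show that the set of $p$ non-normal points is a single $\bar G$-orbit, hence $\bar P$ has orbits of size divisible by $p$ on it. This forces $\bar R_0$ to be the unique normal subgroup of order $p$ in $\bar P$, so $\bar P$ is cyclic or generalized quaternion, hence cyclic for odd $p$, and so has exactly one subgroup of order $p$.

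Then every subgroup of order $p$ lies in a conjugate of $\bar P$, which contains $\bar R_0$ as its unique order-$p$ subgroup. This gives $\text{s}_p(\bar G)=1$, contradicting $p+1$. The step of checking that the $p$ other points form a single orbit, or at least contain no $\bar P$-fixed point, is where I expect the real work. I would try first to derive it from the fibre structure: any $\bar P$-fixed $\bar S\ne\bar R_0$ lifts to $\bar R_0\bar S$ of order $p^2$, and then its preimage would give too many order-$p$ subgroups in $G$ when counted against $2p+1$.
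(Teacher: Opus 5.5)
Your setup is sound and matches the paper's: the map $Q\mapsto QN/N$ is onto, the fibres partition $\Omega(G)$ with sizes $\equiv1\pmod p$, the case $k=1$ follows at once, and for $k=2$ the value $\text{s}_p(\bar G)=p+1$ would force one fibre of size $p+1$, $p$ fibres of size $1$, and $\bar R_0\trianglelefteq\bar G$. But you stop at the decisive step, and the route you sketch cannot close it.

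\begin{itemize}
\item The inference ``$\bar R_0$ is the unique normal subgroup of order $p$ in $\bar P$, so $\bar P$ is cyclic or generalized quaternion'' is false. Normal subgroups of order $p$ of a $p$-group are central, so this only says that $\mathrm{Z}(\bar P)$ is cyclic. The cyclic-or-quaternion theorem needs a unique \emph{subgroup} of order $p$. For odd $p$, the group $\langle a,b\mid a^{p^2}=b^p=1,\ bab^{-1}=a^{1+p}\rangle$ of order $p^3$ is a counterexample. It has exactly $p+1$ subgroups of order $p$, namely those of $\langle a^p,b\rangle\cong\mathbb Z_p\times\mathbb Z_p$. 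Only $\langle a^p\rangle$ is normal, and conjugation by $a$ permutes the other $p$ transitively. This is exactly the configuration you aim for, yet the group is not cyclic and has $\text{s}_p=p+1$.
\item More fundamentally, nothing proved inside $\bar G$ alone can exclude $p+1$: already $\mathbb Z_p\times\mathbb Z_p$ has $\text{s}_p=p+1$ and a normal subgroup of order $p$.
\item Your fallback also fails. $\bar R_0\bar S\cong\mathbb Z_p\times\mathbb Z_p$ already contains all $p+1$ points of $\Omega(\bar G)$, so its preimage contains exactly the $2p+1$ members of $\Omega(G)$, and there is no overcount.
\end{itemize}

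The missing idea is to identify the size-$1$ fibres inside $G$ itself, via $\mathrm{C}_G(N)$. Suppose the fibre over $J$ is a single $Q$. Then $Q$ is the unique, hence normal, Sylow $p$-subgroup of $NQ$, so $[N,Q]\le N\cap Q=1$ and $Q\le\mathrm{C}_G(N)$. Conversely, any $Q\in\Omega(G)$ centralizing $N$ is normal in $NQ$, so it is alone in its fibre. Hence the number of size-$1$ fibres equals $\text{s}_p(\mathrm{C}_G(N))$. By Frobenius this is $0$ or $\equiv1\pmod p$, whereas your situation gives exactly $p$ of them, a contradiction. This is precisely the step the paper uses to rule out $\text{s}_p(\bar G)=p+1$.
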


\begin{proof}
Let $\Omega=\Omega(G)$, so $|\Omega|=kp+1$. For $Q\in\Omega$ we have $Q\cap N=1$, so $QN/N$ has order $p$. Conversely, for each subgroup $J\le G/N$ of order $p$ let $G_J$ be its preimage. Then $|G_J|=p|N|$, so the Sylow $p$-subgroups of $G_J$ have order $p$. Put $n_J=|\Omega\cap G_J|$. By Sylow's theorem $n_J\equiv1\pmod p$. The sets $\Omega\cap G_J$ partition $\Omega$ (each $Q$ determines $J=QN/N$), so
\[
\sum_J n_J=kp+1.
\]
If $n_J>1$ then $n_J\ge p+1$; since $2(p+1)>kp+1$ for $k\in\{1,2\}$, at most one $J$ has $n_J>1$. Put $\mathcal A=\{J:n_J>1\}$, $c=|\{J:n_J=1\}|$, $t=\text{s}_p(G/N)$. Then $t=c+|\mathcal A|$, $|\mathcal A|\le1$, $t\ge1$, and $t\equiv1\pmod p$ by Frobenius.

\medskip
We claim that $J\mapsto Q$, where $Q$ is the unique element of $\Omega\cap G_J$, is a bijection from $\{J:n_J=1\}$ onto $\Omega(\mathrm{C}_G(N))$. If $n_J=1$, then $Q$ is characteristic in $G_J=NQ$ (conjugation by $G_J$ permutes its subgroups of order $p$), so $[n,q]=n^{-1}(q^{-1}nq)\in N$ and $[n,q]=(n^{-1}q^{-1}n)q\in Q$ for all $n\in N$, $q\in Q$; hence $[N,Q]\le N\cap Q=1$, that is, $Q\le \mathrm{C}_G(N)$. Conversely, if $Q\in\Omega$ and $Q\le\mathrm{C}_G(N)$, then $Q\trianglelefteq NQ$, so $Q$ is the unique Sylow $p$-subgroup of $NQ$ and $n_{QN/N}=1$. Injectivity holds as $J=QN/N$ is determined by $Q$.

\medskip
If $c\ge1$, then $p\mid|\mathrm{C}_G(N)|$ and Frobenius gives $c\equiv1\pmod p$. Hence $|\mathcal A|=t-c\equiv0\pmod p$, so $\mathcal A=\varnothing$ and $t=c=\sum_Jn_J=kp+1$. If $c=0$, then $t=|\mathcal A|\le1$, so $t=1$.
\end{proof}

The second lemma restricts the Sylow $p$-subgroups and excludes normal subgroups of order $p$.

\begin{lem}\label{lem:sylow}
Let $p$ be an odd prime and $G$ a group with $\text{s}_p(G)=kp+1$ for an integer
$1\le k<p$, and let $P\in\mathrm{Syl}_p(G)$.
\begin{enumerate}[label=(\roman*)]
\item If $|P|\ge p^2$, then $P$ is noncyclic and $\text{s}_p(P)=p+1$.
\item If $k\ge 2$, then $G$ has no normal subgroup of order $p$.
\end{enumerate}
\end{lem}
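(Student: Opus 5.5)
For (i), I would use the known counting facts for $p$-groups: if $P$ is a $p$-group with $p$ odd, then either $P$ is cyclic, or $\text{s}_p(P)\equiv 1+p \pmod{p^2}$. The second case is the theorem of Kulakoff and Miller for noncyclic $p$-groups, $p$ odd. The first step is to exclude $P$ cyclic when $|P|\ge p^2$. I would count $\Omega$ via the conjugation action of $P$. The fixed points are the normal subgroups of order $p$ of $P$ (Remark~\ref{rem:fixed}), and every other orbit has size divisible by $p$. If $P$ is cyclic, it has exactly one subgroup $Q_0$ of order $p$.

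Take $Q\in\Omega\setminus\{Q_0\}$. Then $Q\not\le P$, so $P\cap \mathrm{N}_G(Q)$ is a proper subgroup of $P$ and the $P$-orbit of $Q$ has size $|P:P\cap\mathrm{N}_G(Q)|\ge p$. To get a contradiction I would sharpen this. Let $P_1=P\cap \mathrm{N}_G(Q)$. If $P_1\neq 1$, then $P_1\ge Q_0$, so $Q_0$ normalizes $Q$ and $Q_0Q$ is a group of order $p^2$. That group contains a second subgroup of order $p$ and lies in some Sylow subgroup; I would rather avoid this route.

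A cleaner route is to work inside the Sylow subgroups themselves. Every $Q\in\Omega$ lies in a Sylow subgroup. If $P$ is cyclic, each Sylow subgroup contributes exactly one element of $\Omega$, so $\text{s}_p(G)$ equals the number of conjugates of $Q_0$, which is $|G:\mathrm{N}_G(Q_0)|$. Now $\mathrm{N}_G(Q_0)\ge P$, and since $P$ is abelian, $\mathrm{N}_G(P)\le \mathrm{N}_G(Q_0)$. Then $kp+1=|G:\mathrm{N}_G(Q_0)|$ divides $|G:\mathrm{N}_G(P)|$.

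I would apply the fixed-point count for $P$ acting on $\Omega$ directly. With $P$ cyclic there is exactly one fixed point, $Q_0$. Each nontrivial orbit $\{Q^x\}$ has stabilizer $P_1$ with $Q_0\le P_1$ whenever $P_1\ne1$. In that case $Q_0Q$ is elementary abelian of order $p^2$, so it lies in a Sylow subgroup that is noncyclic. This contradicts the fact that all Sylow subgroups are conjugate to the cyclic $P$. Hence $P_1=1$, and every nontrivial orbit has size $|P|\ge p^2$. This gives $kp+1\equiv 1\pmod{p^2}$, so $p^2\mid kp$, which is impossible for $1\le k<p$. So $P$ is noncyclic.

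Kulakoff then gives $\text{s}_p(P)=1+p+mp^2$ for some $m\ge0$. The elements of $\Omega$ contained in $P$ number at most $kp+1<p^2+p+1$, so $m=0$ and $\text{s}_p(P)=p+1$. The main obstacle is the cyclic case; the orbit-stabilizer argument above is what I would try first.

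For (ii), suppose $Q_0\trianglelefteq G$ has order $p$. Then $Q_0\le P$ for every Sylow $P$, and every $Q\in\Omega$ normalizes $Q_0$, so $Q_0Q$ has order $p^2$ when $Q\ne Q_0$. Each such $Q_0Q$ is elementary abelian and contains $p+1$ elements of $\Omega$, all of them through $Q_0$. The $2p$ elements of $\Omega\setminus\{Q_0\}$ are therefore partitioned into blocks of size $p$, one block for each subgroup $Q_0Q$. If $|P|=p$, then $Q_0=P$ is the unique Sylow subgroup and $\text{s}_p(G)=1$, a contradiction. So $|P|\ge p^2$, and by (i), $\text{s}_p(P)=p+1$ for every Sylow $P$.

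Now consider $G/Q_0$. Then $\Omega$ minus $Q_0$ maps onto subgroups of order $p$ of $G/Q_0$, with $p$ elements over each, so $G/Q_0$ contains at least $k\ge 2$ subgroups of order $p$ arising this way. Choose two distinct elementary abelian subgroups $E_1=Q_0Q_1$ and $E_2=Q_0Q_2$. Each lies in a Sylow subgroup. Within a single Sylow subgroup $P\ge Q_0$, $\text{s}_p(P)=p+1$ means $P$ contains exactly one $E$. So $E_1$ and $E_2$ lie in different Sylow subgroups $P_1$ and $P_2$. These are conjugate via some $g$, and conjugation by $g$ fixes $Q_0$, which permits comparing the structures. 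Finally, I would use Sylow's theorem for $\mathrm{N}_G(Q_0)=G$ acting on the $k$ blocks: they are permuted transitively with block stabilizers containing Sylow subgroups, so $k\equiv 1\pmod p$. With $2\le k<p$ this is impossible.
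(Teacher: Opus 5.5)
Part (i) is correct and follows the paper's idea with the steps reordered. The paper uses $kp<p^2$ to force every nontrivial $P$-orbit to have size $p$, so $Q_0$ lies in every stabilizer. You instead show that any nontrivial stabilizer contains $Q_0$, so it would produce a noncyclic $Q_0Q$ of order $p^2$; hence all stabilizers are trivial and $p^2\mid kp$. The Kulakoff step is the same in both.

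The gap is in the last step of (ii). The inference ``the blocks are permuted transitively with block stabilizers containing Sylow subgroups, so $k\equiv 1\pmod p$'' is not valid. Transitivity together with $P\le \mathrm{N}_G(E_1)$ only gives $p\nmid k=|G:\mathrm{N}_G(E_1)|$. That is no contradiction, because $2\le k<p$ already forces $p\nmid k$. For a concrete failure of the inference, take $A_5$ acting on the $12$ cosets of a Sylow $5$-subgroup: the stabilizers are Sylow, yet $12\equiv 2\pmod 5$.

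What you need is that a Sylow subgroup $P$ fixes \emph{exactly one} block, and you already have the ingredients. If $P$ normalizes $E_j$, then $PE_j$ is a $p$-group containing $P$, so $E_j\le P$ (Remark~\ref{rem:fixed} with $E_j$ in place of $Q$). You showed that $P$ contains exactly one of the $E$'s. So every other $P$-orbit on the $k$ blocks has length divisible by $p$, which gives $k\equiv 1\pmod p$, impossible for $2\le k<p$. Transitivity and the paragraph about $G/Q_0$ are then unnecessary. Also, $\Omega\setminus\{Q_0\}$ has $kp$ elements, not $2p$.

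With this repair, your (ii) closes differently from the paper. The paper takes the kernel $X$ of the action on the $k$ classes and notes that $|G:X|$ divides $k!$, which is prime to $p$. So $P\le X$ normalizes every $E_i$, hence contains all of them, and $\text{s}_p(P)=kp+1\ne p+1$ contradicts (i).

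Both versions rest on the same fact: $P$ normalizing $E_j$ forces $E_j\le P$. They use $k<p$ in equivalent ways, through $p\nmid k!$ versus $k\not\equiv 1\pmod p$. Your fixed-point count avoids the kernel. The paper's argument avoids first establishing that $P$ contains exactly one $E_i$.
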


\begin{proof}
(i) Suppose $P$ is cyclic, with $Q_0$ its subgroup of order $p$. By Remark~\ref{rem:fixed}, $Q_0$ is the only point of $\Omega$ fixed by $P$. Since $|\Omega\setminus\{Q_0\}|=kp<p^2$, all other $P$-orbits have size $p$, with stabilizer the unique subgroup $P_1$ of index $p$ in $P$. As $|P|\ge p^2$, $Q_0\le P_1$, and $P_1$ is the stabilizer of every point of each such orbit ($P$ is abelian), so $P_1$ fixes $\Omega$ pointwise. In particular $Q_0$ normalizes every $Q\in\Omega$. For $Q\ne Q_0$, $Q_0Q$ is then a $p$-group with two subgroups of order $p$, hence noncyclic; but it lies in a conjugate of the cyclic group $P$, a contradiction. So $P$ is noncyclic, and by Kulakoff's theorem \cite{Kulakoff}, $\text{s}_p(P)\equiv p+1\pmod{p^2}$. As $\text{s}_p(P)\le kp+1<p^2+p+1$, we get $\text{s}_p(P)=p+1$.

\medskip
(ii) Suppose $Z\trianglelefteq G$ has order $p$. If $|G|_p=p$, then $Z$ is the unique
Sylow $p$-subgroup and $\text{s}_p(G)=1$, a contradiction; so $|P|\ge p^2$ and (i)
applies. For $Q\in\Omega\setminus\{Z\}$, $ZQ\cong\mathbb Z_p\times\mathbb Z_p$ contains
exactly $p$ members of $\Omega\setminus\{Z\}$. Since $|\Omega\setminus\{Z\}|=kp$, the
relation $Q\sim Q'\iff ZQ=ZQ'$ has exactly $k$ classes $\mathcal C_1,\dots,\mathcal C_k$,
with $E_i=ZQ_i$ for $Q_i\in\mathcal C_i$. The group $G$ permutes the $k$ classes; let $X$
be the kernel of this action. Then $|G:X|$ divides $k!$, which is coprime to $p$ because
$k<p$, so $P\le X$, and $P$ is a Sylow $p$-subgroup of $X$. Each $E_i$ is normalized by
$X$, because $(ZQ_i)^x=ZQ_i^x$ and $Q_i^x\in\mathcal C_i$. So $E_i$ is a normal
$p$-subgroup of $X$, hence $E_i\le P$. Then
$\Omega\subseteq\{Z\}\cup E_1\cup\dots\cup E_k\subseteq P$, so
$\text{s}_p(P)=kp+1\ne p+1$ since $k\ge2$, contradicting (i).
\end{proof}

\section{Simple groups}\label{sec:simple}

Throughout this section $S$ is a nonabelian finite simple group with $\text{s}_p(S)=2p+1$, and $\Omega=\Omega(S)$. (Abelian simple groups $\mathbb Z_r$ have $\text{s}_p\le1$.) We show that $S$ acts faithfully on $\Omega$, that $|S|_p=p$, that $|S|\le\frac12n(n-1)(n-3)$ with $n=2p+1$, and that $S$ is $2$-transitive on $\Omega$. These facts are used in Section~\ref{sec:thmB} to reach a contradiction.

\medskip
We begin with faithfulness.

\begin{lem}\label{faith}
The conjugation action of $S$ on $\Omega$ is faithful.
\end{lem}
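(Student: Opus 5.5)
The kernel $K$ of the conjugation action of $S$ on $\Omega$ is a normal subgroup of $S$. Since $S$ is simple, either $K=1$, which is what we want, or $K=S$. So the whole argument consists in ruling out $K=S$.

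Suppose $K=S$. Then every subgroup $Q\in\Omega$ is fixed under conjugation by all of $S$, so each of the $2p+1$ subgroups of order $p$ is normal in $S$. Note that $\Omega$ is nonempty, because $\text{s}_p(S)=2p+1\geq 1$, so there is at least one such $Q$. A normal subgroup of order $p$ is a proper nontrivial normal subgroup of the nonabelian simple group $S$, which is impossible. Alternatively, since $p$ is odd or $p=2$, we can argue directly: for $p$ odd, Lemma~\ref{lem:sylow}(ii) with $k=2<p$ already forbids a normal subgroup of order $p$. For $p=2$, simplicity alone suffices, because a normal subgroup of order $2$ would be central and hence a proper nontrivial normal subgroup.

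Therefore $K\neq S$, and simplicity forces $K=1$, so the action is faithful. There is no real obstacle here. The only point to check is that $S$ is nonabelian, which guarantees that a normal subgroup of order $p$ cannot be all of $S$; this holds by the standing assumption of the section.
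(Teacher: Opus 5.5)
Your proof is correct and is essentially the paper's argument: the kernel is normal, so it is $1$ or $S$, and it cannot be $S$ because then the (nonempty) set of order-$p$ subgroups would consist of proper nontrivial normal subgroups of the nonabelian simple group $S$. The alternative route through Lemma~\ref{lem:sylow}(ii) is valid for $p\ge3$ but redundant, since simplicity alone already settles every case.
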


\begin{proof}
The kernel of the action is normal in $S$, so it is $1$ or $S$. If it were $S$, every subgroup of order $p$ would be normal in $S$; being a proper nontrivial subgroup of the nonabelian simple group $S$, this is impossible.
\end{proof}
Note that faithfulness implies that $S$ embeds in $\mathrm{Sym}(\Omega)\cong\mathrm{Sym}(2p+1)$ and $|S|$ divides $(2p+1)!$. If $p=2$, then $S\le\mathrm{Sym}(5)$ is nonabelian simple, so $S\cong A_5$, which has $15\ne5$ involutions. Hence, from now on $p\ge3$.

\medskip
Next we show that the Sylow $p$-subgroups of $S$ have order $p$.

\begin{prop}\label{prop:Sp}
Let $p\ge3$. Then $|S|_p=p$. Consequently:
\begin{enumerate}[label=(\alph*)]
\item every subgroup of order $p$ is a Sylow $p$-subgroup;
\item $S$ acts transitively on $\Omega$;
\item for $Q\in\Omega$ the stabilizer of $Q$ is $\mathrm{N}_S(Q)$, of index $2p+1$.
\end{enumerate}
\end{prop}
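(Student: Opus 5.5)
Suppose for contradiction that $|P|\ge p^2$ for $P\in\mathrm{Syl}_p(S)$. Since $p\ge3$ and $k=2<p$, Lemma~\ref{lem:sylow}(i) applies: $P$ is noncyclic and $\text{s}_p(P)=p+1$. The plan is to show that the $p+1$ subgroups of order $p$ in $P$ must all lie in a single elementary abelian subgroup $E\cong\mathbb Z_p\times\mathbb Z_p$. Then we count how the remaining $p$ points of $\Omega$ behave, and derive a normal subgroup of order $p$ or a contradiction with simplicity.

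\textbf{Step 1: the subgroup $E$ and the fixed point $Z$.} Let $Z$ be a central subgroup of order $p$ of $P$. For any other $Q\le P$ of order $p$, $ZQ\cong\mathbb Z_p\times\mathbb Z_p$ already contains $p+1$ subgroups of order $p$. Hence $\Omega(P)=\Omega(E)$ with $E=ZQ$. Consequently $E$ is characteristic in $P$, and $\Omega(P)$ consists of $p+1$ points of $\Omega$.

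\textbf{Step 2: orbits of $P$ on $\Omega$.} By Remark~\ref{rem:fixed}, the fixed points of $P$ on $\Omega$ are the normal subgroups of order $p$ of $P$ contained in $E$. The set $\Omega\setminus\Omega(P)$ has exactly $p$ elements and is $P$-invariant. Since no point outside $P$ is fixed, it forms a single $P$-orbit of size $p$. The fixed points inside $\Omega(P)$ number $\equiv 2p+1\equiv1\pmod p$, so $P$ acts on $\Omega(E)$ with either $1$ or $p+1$ fixed points.

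\textbf{Step 3: contradiction.} Consider the $p$ subgroups $R\in\Omega$ not in $P$. They lie in other Sylow subgroups $P^g$, each of which contains its own $E^g$ with $p+1$ points. Distinct Sylow subgroups give sets $\Omega(E^g)$ of size $p+1$ inside a set of size $2p+1$. So any two such sets intersect, and if $E^g\neq E$ they share at most one point, since two subgroups of order $p$ generate $E$. Counting, the union of $\Omega(E)$ and $\Omega(E^g)$ already has $2p+1$ elements, so every point lies in $E\cup E^g$. Any third conjugate $E^h$ must meet both $E$ and $E^g$, and its $p+1$ points must be distributed among them. Since two points in $E$ would force $E^h=E$, $E^h$ has at most one point in each of $E$ and $E^g$, so $p+1\le 2$, which is impossible. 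Hence $E$ has at most two conjugates. This makes the normal subgroup $\mathrm N_S(E)$ of index at most $2$ in $S$, hence equal to $S$ by simplicity. Then $E\trianglelefteq S$ is a proper nontrivial normal subgroup, contradicting simplicity. (If $E$ has exactly one conjugate, $E\trianglelefteq S$ directly.) Therefore $|S|_p=p$.

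\textbf{Consequences.} For (a), every subgroup of order $p$ now has order $|S|_p$ and is Sylow. For (b), Sylow's theorem gives that the Sylow subgroups are conjugate, so $S$ is transitive on $\Omega$. For (c), the stabilizer in the conjugation action is $\mathrm N_S(Q)$, and the orbit-stabilizer theorem gives index $|\Omega|=2p+1$. The main obstacle is the Step 3 intersection count, specifically confirming that two distinct conjugates $E\ne E^g$ share at most one point. This holds because two distinct order-$p$ subgroups generate the whole of a group of order $p^2$.
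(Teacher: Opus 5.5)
Your proof is correct, but it reaches the contradiction by a different route from the paper's.

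\textbf{The paper's route.} It first uses faithfulness (Lemma~\ref{faith}) to get $|P|\le|\mathrm{Sym}(2p+1)|_p=p^2$, hence $P\cong\mathbb Z_p\times\mathbb Z_p$ by Lemma~\ref{lem:sylow}(i). Then $P$ fixes its own $p+1$ points and has a single orbit of size $p$ on the rest, on which it acts through a group of order at most $p$. So some $1\ne x\in P$ acts trivially on all of $\Omega$, contradicting faithfulness.

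\textbf{Your route.} You never bound $|P|$. You take $E=ZQ\cong\mathbb Z_p\times\mathbb Z_p$ with $\Omega(P)=\Omega(E)$ and note that the sets $\Omega(E^g)$ are $(p+1)$-subsets of a $(2p+1)$-set that pairwise meet in at most one point. You deduce there are at most two of them, and conclude from $|S:\mathrm N_S(E)|\le 2$ and simplicity that $E\trianglelefteq S$. Two small remarks: your Step~2, which is essentially the paper's orbit analysis, is never used and can be deleted. Also, $\mathrm N_S(E)$ is normal only because its index is at most $2$, not a priori; that is all you need.

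\textbf{What each buys.} The paper's argument is the quicker one, given the permutation-action set-up of Section~\ref{sec:simple}. Yours avoids the embedding in $\mathrm{Sym}(2p+1)$ and uses simplicity only in the last line, so it extends to any $G$ with $\text{s}_p(G)=2p+1$, $p$ odd and $|G|_p\ge p^2$:
\begin{itemize}
\item If $E\trianglelefteq G$, then $\Omega(P^g)=\Omega(E^g)=\Omega(E)$ for every Sylow $p$-subgroup $P^g$. Hence $\Omega(G)=\Omega(E)$ has only $p+1$ elements, a contradiction.
\item Otherwise $\mathrm N_G(E)$ has index $2$. It is therefore normal and, as $p$ is odd, contains every Sylow $p$-subgroup $P'$. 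Since $\mathrm N_G(E^g)=\mathrm N_G(E)^g=\mathrm N_G(E)$, $P'$ normalizes both $E$ and $E^g$. This forces $E,E^g\le P'$ and so $\text{s}_p(P')\ge p+2$, contrary to Lemma~\ref{lem:sylow}(i).
\end{itemize}
This would prove Proposition~\ref{Gp} directly, without the minimal-counterexample argument.
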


\begin{proof}
Let $P\in\mathrm{Syl}_p(S)$. By Lemma~\ref{faith}, $|P|\le|\mathrm{Sym}(2p+1)|_p=p^2$. Suppose $|P|=p^2$. According to Lemma~\ref{lem:sylow}(i), $P$ is not cyclic, so $P\cong\mathbb Z_p\times\mathbb Z_p$. By Remark~\ref{rem:fixed}, $P$ fixes exactly its $p+1$ subgroups of order $p$, and the remaining $p$ points of $\Omega$ form a single $P$-orbit. The image of $P$ in $\mathrm{Sym}(p)$ has order at most $p$, so some $1\ne x\in P$ acts trivially on this orbit; it also fixes the other $p+1$ points, so $x$ acts trivially on $\Omega$, contradicting Lemma~\ref{faith}. Hence, $|P|=p$.

\medskip
Then (a) is clear; (b) follows since the Sylow subgroups are conjugate; (c) is the orbit-stabilizer theorem.
\end{proof}

Now fix $P=\langle x\rangle\in\Omega$ and consider it as a base point. Put $H=\mathrm{N}_S(P)$, $C=\mathrm{C}_S(P)$, and  $\Omega'=\Omega\setminus\{P\}$. Hence,  $H$ is the stabilizer of $P$, and $|S|=(2p+1)|H|$. We first describe the orbits of $P$ on $\Omega'$.

\begin{lem}\label{orb}
$P$ has exactly two orbits $O_1,O_2$ on $\Omega'$, each of size $p$, and every $h\in H$ permutes $\{O_1,O_2\}$.
\end{lem}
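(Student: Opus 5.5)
We work inside the conjugation action of $S$ on $\Omega$, using Proposition~\ref{prop:Sp}: $|S|_p=p$ and every member of $\Omega$ is a Sylow $p$-subgroup.

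\textbf{Orbits of $P$.} By Remark~\ref{rem:fixed}, the fixed points of $P$ on $\Omega$ are the normal subgroups of order $p$ of $P$. Since $|P|=p$, this means $P$ itself is the only fixed point. Hence $P$ acts on $\Omega'$, a set of size $2p$, without fixed points. Every $P$-orbit has size $1$ or $p$, so $\Omega'$ splits into exactly two orbits $O_1,O_2$, each of size $p$.

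\textbf{Action of $H$.} Take $h\in H=\mathrm{N}_S(P)$. Conjugation by $h$ fixes the point $P$, so it preserves $\Omega'$. It also maps $P$-orbits to $P$-orbits: for $Q\in O_i$,
\[
h\cdot(P\cdot Q)=(hPh^{-1})\cdot(h\cdot Q)=P\cdot(h\cdot Q).
\]
So $h$ sends the orbit $O_i$ to the $P$-orbit of $h\cdot Q$. Since the $P$-orbits in $\Omega'$ are exactly $O_1$ and $O_2$, $h$ permutes $\{O_1,O_2\}$.

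This is all elementary. The only point needing care is ruling out fixed points of $P$ other than $P$ itself, and that comes directly from $|P|=p$ together with Remark~\ref{rem:fixed}.
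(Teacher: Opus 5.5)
Your proposal is correct and follows the paper's proof: $P$ has no fixed point on $\Omega'$ by Remark~\ref{rem:fixed} and $|P|=p$, which forces two orbits of size $p$, and $h\in H$ maps $P$-orbits to $P$-orbits. Your identity $h\cdot\mathrm{Orb}_P(Q)=\mathrm{Orb}_{hPh^{-1}}(h\cdot Q)=\mathrm{Orb}_P(h\cdot Q)$ is the set-level form of the paper's element computation $h\cdot(x^k\cdot Q)=x^{jk}\cdot(h\cdot Q)$.
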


\begin{proof}
By Remark~\ref{rem:fixed} and $|P|=p$, $P$ fixes only the point $P$, so all $P$-orbits on $\Omega'$ have size $p$; as $|\Omega'|=2p$ there are exactly two. For $h\in H$ we have $hxh^{-1}=x^{j}$ for some $j$, hence $h\cdot(x^k\cdot Q)=x^{jk}\cdot(h\cdot Q)$, so $h\cdot\mathrm{Orb}_P(Q)=\mathrm{Orb}_P(h\cdot Q)$, where $\mathrm{Orb}_P(Q)$ denotes the $P$-orbit of $Q\in\Omega'$.
\end{proof}

To bound $|H|$ we need an elementary fact about permutations.

\begin{lem}\label{fullcyc}
Let $\rho,\sigma$ be permutations of a finite set $\Delta$, with $\rho$ a full cycle. If $\rho\sigma=\sigma\rho$ then $\sigma=\rho^\lambda$ for some $\lambda\in\mathbb Z$.
\end{lem}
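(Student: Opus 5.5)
The plan is the standard regularity argument. Write $n=|\Delta|$ and $\rho=(\delta_0\,\delta_1\,\dots\,\delta_{n-1})$, so that $\rho^i(\delta_0)=\delta_i$, with indices read modulo $n$. Since $\rho$ is a full cycle, the cyclic group $\langle\rho\rangle$ acts transitively on $\Delta$. We may assume $\Delta\ne\varnothing$; the empty case is trivial.

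First, define $\lambda$ from the image of a single base point. Write $\sigma(\delta_0)=\delta_\lambda$ for some $\lambda\in\{0,\dots,n-1\}$, which is possible by transitivity.

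Next, compare $\sigma$ with $\rho^\lambda$ on an arbitrary point $\delta_i=\rho^i(\delta_0)$. Commutation of $\sigma$ with $\rho$ gives commutation with every power $\rho^i$, so
\[
\sigma(\delta_i)=\sigma\rho^i(\delta_0)=\rho^i\sigma(\delta_0)=\rho^i(\delta_\lambda)=\delta_{i+\lambda}=\rho^\lambda(\delta_i).
\]
As every point of $\Delta$ has the form $\delta_i$, this yields $\sigma=\rho^\lambda$. Conceptually, the centralizer of a transitive abelian group acting regularly is that group itself; here $\langle\rho\rangle$ has order $n$ and is transitive on $n$ points, so it is regular.

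There is no real obstacle. The only point needing care is the bookkeeping of indices modulo $n$, together with the observation that commuting with $\rho$ implies commuting with all of its powers.
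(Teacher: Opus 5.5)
Your proof is correct and follows essentially the same route as the paper's. You label $\Delta$ along the cycle, choose $\lambda$ from $\sigma(\delta_0)=\rho^\lambda(\delta_0)$, and propagate using $\sigma\rho^i=\rho^i\sigma$; this is exactly the argument the paper gives.
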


\begin{proof}
Write $\Delta=\{a_0,\dots,a_{m-1}\}$ with $\rho(a_i)=a_{(i+1)\bmod m}$. Choose $\lambda$ with $\sigma(a_0)=\rho^\lambda(a_0)$. Then $\sigma(a_i)=\sigma(\rho^i(a_0))=\rho^i(\sigma(a_0))=\rho^\lambda(a_i)$ for all $i$.
\end{proof}

This yields the following bound on the order of $S$.

\begin{lem}\label{bound}
$|H|\le2p(p-1)$. Consequently, with $n=2p+1$,
\begin{equation}\label{eq:bound}
|S|\le 2p(p-1)(2p+1)=n(n-1)(n-3)/2.
\end{equation}
\end{lem}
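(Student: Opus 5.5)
Consider the subgroup $K\le H$ of elements that fix each of the two $P$-orbits $O_1,O_2$ setwise. By Lemma~\ref{orb}, $H$ permutes $\{O_1,O_2\}$, so $|H:K|\le 2$. It therefore suffices to show $|K|\le p(p-1)$. Since $S$ acts faithfully on $\Omega$ (Lemma~\ref{faith}) and $H$ fixes the point $P$, the group $K$ embeds in $\mathrm{Sym}(O_1)\times\mathrm{Sym}(O_2)$.

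First we treat the centralizer. Let $C_K=C\cap K$; in fact $C\le K$, because an element commuting with $x$ maps $P$-orbits to $P$-orbits compatibly with $x$. The generator $x$ acts on each $O_i$ as a full $p$-cycle $\rho_i$, since the orbits have size $p$ and $P$ has prime order. Take $c\in C_K$. It commutes with $x$ and preserves each $O_i$, so its restriction to $O_i$ commutes with $\rho_i$. By Lemma~\ref{fullcyc} this restriction equals $\rho_i^{\lambda_i}$. Hence $c$ acts on $\Omega$ exactly as some element $\rho_1^{\lambda_1}\rho_2^{\lambda_2}$, and faithfulness gives $|C_K|\le p^2$.

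The key step is to rule out $|C_K|>p$, and this is where the main obstacle lies. We use $|S|_p=p$ from Proposition~\ref{prop:Sp}. The image of $C_K$ is a $p$-group, because it lies inside $\langle\rho_1\rangle\times\langle\rho_2\rangle$, which has order $p^2$. But $C_K$ embeds faithfully in that group, so $C_K$ is itself a $p$-subgroup of $S$. Therefore $|C_K|\le p$, and since $P\le C_K$ we get $C_K=P$.

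Finally, $K/C_K$ embeds in $\mathrm{Aut}(P)\cong\mathbb Z_{p-1}$, since $K\le H=\mathrm{N}_S(P)$. Hence $|K|\le p(p-1)$, and so $|H|\le 2|K|\le 2p(p-1)$.

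The bound on $|S|$ then follows from $|S|=(2p+1)|H|$. Substituting $p=(n-1)/2$ and $p-1=(n-3)/2$ gives $2p(p-1)(2p+1)=n(n-1)(n-3)/2$, which is \eqref{eq:bound}.
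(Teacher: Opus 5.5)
Your argument is correct and is essentially the paper's proof. Your $C\cap K$ is exactly the paper's $D$, the kernel of $C$ acting on $\{O_1,O_2\}$, and you show it equals $P$ by the same full-cycle, faithfulness and $|S|_p=p$ argument; you just take the index-$\le 2$ step before the $\mathrm{Aut}(P)$ step. One caveat: your aside ``in fact $C\le K$'' is unjustified, since commuting with $x$ only shows that $c$ carries $P$-orbits to $P$-orbits, so $c$ may swap $O_1$ and $O_2$ (which is why the paper only claims $|C:D|\le 2$); nothing depends on it, because $K/(C\cap K)\hookrightarrow\mathrm{Aut}(P)$ holds regardless.
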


\begin{proof}
The group $H/C$ embeds in $\mathrm{Aut}(P)\cong\mathrm{Aut}(\mathbb Z_p)$, so $|H:C|\le p-1$. By Lemma~\ref{orb}, $C\trianglelefteq H$ permutes $\{O_1,O_2\}$; let $D$ be the kernel of this action, so $|C:D|\le2$ and $|H|\le2(p-1)|D|$. It remains to show $D=P$.

\medskip
Clearly $P\le D$. Let $d\in D$. Then $d$ fixes the point $P$, commutes with $x$, and preserves each $O_i$, on which $x$ acts as a full $p$-cycle. By Lemma~\ref{fullcyc}, $d$ acts on $O_i$ as $x^{\lambda_i}$ for some $\lambda_i\in\mathbb Z_p$ ($i=1,2$). Hence $d\mapsto(\lambda_1,\lambda_2)$ is a homomorphism $D\to\mathbb Z_p^2$, injective because the action of $S$ on $\Omega$ is faithful. So $D$ is a $p$-group; as $|S|_p=p$, we get $D=P$. Finally, $|S|=(2p+1)|H|$ and $p=(n-1)/2$, $2p-2=n-3$ give the stated form.
\end{proof}

Finally we show that $S$ is $2$-transitive on $\Omega$.

\begin{lem}\label{2trans}
For $p\ge3$, $S$ is $2$-transitive on $\Omega$.
\end{lem}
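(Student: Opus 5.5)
The plan is to show that the point stabilizer $H=\mathrm{N}_S(P)$ is transitive on $\Omega'=\Omega\setminus\{P\}$. Since $S$ is transitive on $\Omega$ by Proposition~\ref{prop:Sp}(b), this is exactly $2$-transitivity.

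First, the $H$-orbits on $\Omega'$ are unions of $P$-orbits, because $P\le H$. By Lemma~\ref{orb} the only $P$-orbits are $O_1$ and $O_2$, each of size $p$. Hence either $H$ is transitive on $\Omega'$, or $H$ has exactly the two orbits $O_1$ and $O_2$. Assume the latter; we aim for a contradiction. Then $S$ has rank $3$ on $\Omega$, with two nontrivial suborbits of length $p$ at every point. Let $\Gamma$ be the orbital digraph on $\Omega$ whose arcs are the $S$-images of the pairs $(P,R)$ with $R\in O_1$. It is $S$-invariant and has out-degree $p$ at each vertex. The paired orbital of $\Gamma$ is again a nontrivial orbital, so it is either $\Gamma$ itself or the orbital $\Gamma'$ corresponding to $O_2$.

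Case 1: $\Gamma$ is self-paired. Then $\Gamma$ is an undirected $p$-regular graph on $2p+1$ vertices. Its number of edges is $p(2p+1)/2$. This is not an integer, since $p\ge3$ is odd and $2p+1$ is odd, which is a contradiction.

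Case 2: the paired orbital of $\Gamma$ is $\Gamma'$. Then for every two distinct points $Q,R$, exactly one of $(Q,R)$ and $(R,Q)$ is an arc of $\Gamma$, so $\Gamma$ is an $S$-invariant tournament. Since $S$ is nonabelian simple, $|S|$ is even by the Feit--Thompson theorem, so $S$ contains an involution $t$. By Lemma~\ref{faith}, $t$ acts nontrivially on $\Omega$, so it has a $2$-cycle $(Q\,R)$. Then $t$ maps the arc between $Q$ and $R$ to the reversed pair, which is not an arc, contradicting the $S$-invariance of $\Gamma$.

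Both cases are impossible, so $H$ is transitive on $\Omega'$, and $S$ is $2$-transitive. The main obstacle is Case 2, the tournament case. The parity count in Case 1 is routine, but Case 2 needs a genuine input: either Feit--Thompson or, alternatively, a direct argument via Sylow $2$-subgroups. I would simply invoke Feit--Thompson. The only point to check there is that an involution really swaps some pair, and this is exactly what faithfulness provides.
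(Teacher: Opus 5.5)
Your proposal is correct and follows the paper's proof essentially step for step. You reduce to transitivity of $H$ on $\Omega'$ and rule out the rank-$3$ case with two suborbits of size $p$: the self-paired case fails by the parity count on a $p$-regular graph with $2p+1$ vertices, and the non-self-paired case gives an $S$-invariant tournament, which an involution (from Feit--Thompson, acting nontrivially by faithfulness) cannot preserve. The paper phrases the same argument through ``types'' and ``mutual'' points rather than orbitals and paired orbitals.
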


\begin{proof}
By transitivity of $S$ on $\Omega$, it suffices to show that $H$ is transitive on $\Omega'$ by \cite{Cam99}. Each $H$-orbit on $\Omega'$ is a union of $P$-orbits, and $H$ permutes $\{O_1,O_2\}$ (Lemma~\ref{orb}). So if $H$ is not transitive on $\Omega'$, its orbits are exactly $O_1$ and $O_2$. Assume this.

\medskip
For $u\ne v$ in $\Omega$ choose $g\in S$ with $g\cdot P=u$, and say that $v$ has \emph{type $i$ relative to $u$} if $g^{-1}\cdot v\in O_i$. This is independent of $g$: a different choice is $gh$ with $h\in H$, and $h^{-1}g^{-1}\cdot v\in O_i$ as $H$ preserves $O_i$. The notion is $S$-invariant: $v$ has type $i$ relative to $u$ iff $s\cdot v$ has type $i$ relative to $s\cdot u$. Each point has exactly $p$ points of each type.

\medskip
Fix $i$ and call $v\in O_i$ \emph{mutual} if $P$ has type $i$ relative to $v$. Since $H$ is transitive on $O_i$ and fixes $P$, by invariance either all $v\in O_i$ are mutual or none is. Suppose all are. By $S$-transitivity and invariance, the same holds at every point, so ``type $i$'' is a symmetric relation, that is, an undirected graph on $2p+1$ vertices, regular of degree $p$. The degree sum $(2p+1)p$ is odd (both factors are odd), yet it equals twice the number of edges, a contradiction. Hence for no $i$ and no $u\ne v$ is $v$ of type $i$ relative to $u$ while $u$ is of type $i$ relative to $v$.

\medskip
Write $u\to v$ if $v$ has type $1$ relative to $u$. For $u\ne v$, by the previous paragraph the types of $v$ relative to $u$ and of $u$ relative to $v$ are different, so exactly one of $u\to v$, $v\to u$ holds. Hence,  $\to$ is a tournament on $\Omega$, and by invariance $S$ acts on it by automorphisms. (Compare \cite{Moon68}: tournaments have automorphism groups of odd order.)

\medskip
By the Feit--Thompson theorem \cite{FT63}, $|S|$ is even, so $S$ contains an involution $t$. If $t$ moved some $u$ to $v=t\cdot u\ne u$, then $t\cdot v=u$, and $u\to v$ would imply $v\to u$ (and vice versa), contradicting the tournament property. So $t$ fixes every point of $\Omega$, contradicting Lemma~\ref{faith}.
\end{proof}

\section{Proof of Theorem~\ref{simple}}\label{sec:thmB}

Suppose $S$ is as in Section~\ref{sec:simple}. By the results there, $S$ is a nonabelian simple group with a faithful $2$-transitive action on $n=2p+1$ points, $p\ge3$ prime, $|S|_p=p$, and $|S|\le\frac12n(n-1)(n-3)$ by \eqref{eq:bound}. (The case $p=2$ was settled in Section~\ref{sec:simple}.) We now use the classification of the finite $2$-transitive groups, in the form (see \cite[\S7.7]{DM96}): if a nonabelian simple group $S$ has a faithful $2$-transitive action on $n$ points, then $S$ and $n$ appear in the following list.
\begin{enumerate}[label=(\arabic*), itemsep=1pt]
\item $A_n$ in its natural action ($n\ge5$);
\item $\mathrm{PSL}_d(q)$, $d\ge2$, $(d,q)\ne(2,2),(2,3)$, on the points of projective space, $n=\frac{q^d-1}{q-1}$;
\item $\mathrm{PSU}_3(q)$, $q\ge3$, $n=q^3+1$;
\item $\mathrm{Sz}(q)$, $q=2^{2m+1}\ge8$, $n=q^2+1$;
\item the Ree group ${}^2G_2(q)$, $q=3^{2m+1}\ge27$, $n=q^3+1$;
\item $\mathrm{Sp}_{2m}(2)$, $m\ge3$, $n=2^{m-1}(2^m\pm1)$;
\item sporadic cases: $\mathrm{PSL}_2(11)$ ($n=11$), $A_7$ ($n=15$), $M_{11}$ ($n=11,12$), $M_{12}$ ($n=12$), $M_{22}$ ($n=22$), $M_{23}$ ($n=23$), $M_{24}$ ($n=24$), $\mathrm{HS}$ ($n=176$), $\mathrm{Co}_3$ ($n=276$).
\end{enumerate}
Here $n=2p+1$ is odd, so all cases of even degree are excluded; this removes (5), (6) and the even-degree sporadic cases. In (3) and (4) with $n$ odd, $q$ is even, and $n=2p+1$ gives $q^3=2p$, respectively $q^2=2p$; as $4\mid q^2$ and $8\mid q^3$, this would give $2\mid p$, impossible.

\medskip
\noindent\emph{Case (7).} The remaining odd degrees are $11$ ($\mathrm{PSL}_2(11)$, $M_{11}$), $15$ ($A_7$) and $23$ ($M_{23}$). The bound $\frac12n(n-1)(n-3)$ equals $440$, $1260$, $5060$ for $n=11,15,23$ respectively, while $|\mathrm{PSL}_2(11)|=660$, $|M_{11}|=7920$, $|A_7|=2520$ and $|M_{23}|=10200960$. In each case $|S|$ exceeds the bound, contradicting \eqref{eq:bound}.

\medskip
\noindent\emph{Case (1).} $A_{2p+1}$ contains two disjoint $p$-cycles (even permutations, as $p$ is odd), which generate a subgroup $\mathbb Z_p\times\mathbb Z_p$, contradicting $|S|_p=p$.

\medskip
\noindent\emph{Case (2).} Here
\[
2p+1=\frac{q^d-1}{q-1}=1+q+\cdots+q^{d-1},\quad\text{so}\quad 2p=q\,e,\ \ e=1+q+\cdots+q^{d-2}.
\]
If $q>2$ and $d=2$, then $q=2p$ is a prime power, impossible as $p$ is odd. If $q>2$ and $d\ge3$, then $q\ge3$ and $e\ge q+1\ge4$, but the only divisors of $2p$ are $1,2,p,2p$, so $q\in\{p,2p\}$ forces $e\le2$, a contradiction. If $q=2$, then $d\ge3$ and $n=2^d-1$. For $d=3$, $|S|=168>84=\frac12\cdot7\cdot6\cdot4$. For $d\ge4$, $|S|=|\mathrm{GL}_d(2)|>2^{d^2-2}\ge2^{3d-1}>\frac12n^3$ (using $\prod_{i\ge1}(1-2^{-i})>\frac14$ and $d^2-3d-1\ge0$), again contradicting \eqref{eq:bound}.

\medskip
All cases are impossible, so no such $S$ exists. This proves Theorem~\ref{simple}. \qed

\section{Arbitrary finite groups}\label{sec:general}

We now use Theorem~\ref{simple} to control arbitrary groups with $\text{s}_p(G)=2p+1$. We show that the Sylow $p$-subgroups have order $p$, that a Sylow $p$-subgroup together with $\mathrm{O}_{p'}(G)$ generates a normal subgroup, and that $2p+1$ is a prime power. First, the Sylow $p$-subgroups have order $p$.

\begin{prop}\label{Gp}
Let $p$ be an odd prime and $G$ a group with $\text{s}_p(G)=2p+1$. Then $|G|_p=p$.
\end{prop}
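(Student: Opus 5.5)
The plan is to argue by a minimal counterexample. Let $G$ have $\text{s}_p(G)=2p+1$ and $|G|_p\ge p^2$, with $|G|$ minimal. First replace $G$ by $L=\langle\Omega\rangle$. This subgroup is normal in $G$, contains every subgroup of order $p$ (so $\text{s}_p(L)=2p+1$), and has the same $p$-part, since $|G:L|$ is prime to $p$: every $p$-element of order $p$ lies in $L$, so $P\cap L=P$ by considering $\Omega_1$ of $PL/L$. Hence we may assume $G=\langle\Omega\rangle$. By Lemma~\ref{lem:sylow}, a Sylow $p$-subgroup $P$ is noncyclic with $\text{s}_p(P)=p+1$, and $G$ has no normal subgroup of order $p$.

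\emph{Step 1: reduce to $\mathrm{O}_{p'}(G)=1$.} Let $N=\mathrm{O}_{p'}(G)\ne1$. By Lemma~\ref{lem:Q}, $\text{s}_p(G/N)\in\{1,2p+1\}$.
\begin{itemize}
\item If it equals $2p+1$, minimality gives $|G|_p=|G/N|_p=p$, a contradiction.
\item If it equals $1$, then $PN\trianglelefteq G$. Hence $\Omega\subseteq PN$ and $G=PN$. Each Sylow $p$-subgroup contains exactly $p+1$ members of $\Omega$, and the remaining $p$ points of $\Omega\setminus\Omega(P)$ form a single $P$-orbit: there are no fixed points there by Remark~\ref{rem:fixed}, and orbit sizes are powers of $p$. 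The kernel of $P$ on this orbit contains a subgroup $P_1$ of index $p$. For $Q$ in the orbit, $P_1Q$ is a $p$-group, hence lies in some Sylow $P^g\neq P$. So $P_1\le P\cap P^g$ is a large intersection, and $P^g$ would contain $\Omega(P_1)\cup\{Q\}$ together with the $P_1$-translates, giving more than $p+1$ subgroups of order $p$ in $P^g$. I would turn this into a count contradicting $\text{s}_p(P^g)=p+1$.
\end{itemize}

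\emph{Step 2: $\mathrm{O}_{p'}(G)=1$.} Take a minimal normal subgroup $M$. Then $p\mid|M|$: otherwise $M\le\mathrm{O}_{p'}(G)$. Also $\Omega(M)$ is a $G$-invariant union of $G$-orbits, with $\text{s}_p(M)\equiv1\pmod p$, so $\text{s}_p(M)\in\{1,p+1,2p+1\}$.
\begin{itemize}
\item If $M$ is an elementary abelian $p$-group, then $\text{s}_p(M)=1$ is excluded by Lemma~\ref{lem:sylow}(ii). Since $M\le P$ and $\text{s}_p(P)=p+1$, we get $M\cong\mathbb Z_p^2$ and $\Omega(P)=\Omega(M)$. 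The other $p$ points then form a $G$-invariant set. Conjugates of $P$ all contain $M$ and hence share these $p+1$ points. This forces $G$ to permute the $p$ outside points with $P$ acting transitively. Considering $C_G(M)$, which has index dividing $|\mathrm{GL}_2(p)|$, and the elements of order $p$ outside $M$, I expect to derive a contradiction, for instance by showing that each outside $Q$ centralizes $M$ and then $MQ$ has $p^2+p+1$ subgroups of order $p$.
\item If $M=T^r$ with $T$ nonabelian simple and $p\mid|T|$, then $\text{s}_p(T)\ge p+1$. The formula $\text{s}_p(T^r)=\bigl((1+(p-1)\text{s}_p(T))^r-1\bigr)/(p-1)$ rules out $r\ge2$. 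So $M=T$ is simple, and Theorem~\ref{simple} excludes $\text{s}_p(T)=2p+1$. Hence $\text{s}_p(T)=p+1$, and by \cite{Asboei26}, $T\cong\mathrm{PSL}_2(p)$, with $|T|_p=p$. The $p$ points outside $T$ generate a $p$-group modulo $C_G(T)T$, since $\mathrm{Out}(\mathrm{PSL}_2(p))$ has order $2$. So they lie in $C_G(T)$, and $C_G(T)$ has $\text{s}_p\equiv1\pmod p$ with at most $p$ points, hence none. This is a contradiction.
\end{itemize}

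The main obstacle is Step~1 in the case $\text{s}_p(G/N)=1$, together with the abelian case of Step~2. In both, the configuration is ``$p+1$ points in each Sylow, $p$ points outside forming one $P$-orbit'', and a careful counting of how Sylow subgroups overlap is required. I would first try to show that the index-$p$ subgroup $P_1$ fixing the outside orbit is normal in $G$. Its $\Omega_1$ would then be a normal elementary abelian subgroup whose structure contradicts $\text{s}_p(P)=p+1$ or Lemma~\ref{lem:sylow}(ii).
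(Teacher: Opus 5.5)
Your proposal is not yet a proof. The two places you flag as the main obstacle are left open (``I would turn this into a count'', ``I expect to derive a contradiction''), and the nonabelian case contains a non sequitur. Neither open case needs the Sylow-overlap counting you have in mind.

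\emph{Step~1.} The case $\text{s}_p(G/N)=1$ cannot occur at all. Since $P\cap N=1$, the group $PN/N\cong P$ is noncyclic with $\text{s}_p(P)=p+1$ subgroups of order $p$, so $\text{s}_p(G/N)\ge p+1$. This one-line embedding argument is exactly how the paper disposes of normal $p'$-subgroups. Your sketched count would not have closed this case anyway. If $P\cong\mathbb Z_p\times\mathbb Z_p$, then $P_1$ is cyclic and $P_1Q\cong\mathbb Z_p\times\mathbb Z_p$ has exactly $p+1$ subgroups of order $p$, so nothing contradicts $\text{s}_p(P^g)=p+1$.

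\emph{Abelian case of Step~2.} You already have everything you need. $M$ lies in every Sylow $p$-subgroup $P^g$, and $\Omega(P^g)=\Omega(M)$ because both sets have size $p+1$. Every $Q\in\Omega$ lies in some Sylow $p$-subgroup, so $\Omega=\Omega(M)$ and $\text{s}_p(G)=p+1\ne 2p+1$.

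\emph{Nonabelian case of Step~2.} From $|\mathrm{Out}(\mathrm{PSL}_2(p))|=2$ you only get $\Omega\subseteq T\times\mathrm{C}_G(T)$. This does not place the $p$ outside subgroups in $\mathrm{C}_G(T)$: a subgroup $\langle(t,c)\rangle$ with $t\ne1\ne c$ lies in neither factor. You can repair this by counting. With $C=\mathrm{C}_G(T)$ and $\text{s}_p(T)=p+1$, one finds $\text{s}_p(T\times C)=p+1+p^2\,\text{s}_p(C)$, which is never $2p+1$.

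The paper's route is shorter and avoids both Asboei's classification and your abelian/nonabelian split. Take any proper nontrivial $N\trianglelefteq G$ with $p\mid|N|$ and suppose $\text{s}_p(N)=p+1$. An outside $Q$ normalizes some $Q_1\in\Omega(N)$. The $p$ subgroups of order $p$ of $E=QQ_1\cong\mathbb Z_p\times\mathbb Z_p$ other than $Q_1$ lie outside $N$, so they are exactly the $p$ outside points. Hence $E\trianglelefteq G$ and $Q_1=E\cap N\trianglelefteq G$, contradicting Lemma~\ref{lem:sylow}(ii). This single argument covers both your $M\cong\mathbb Z_p\times\mathbb Z_p$ and $M=T$ cases.

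\emph{Preliminary reduction.} Your justification is wrong: $|G:\langle\Omega\rangle|$ need not be prime to $p$. For instance, $\mathbb Z_p\times\mathbb Z_{p^2}$ has $p+1$ subgroups of order $p$ but is not generated by them. Minimality only needs $|\langle\Omega\rangle|_p\ge p^2$, which holds because $\langle\Omega\rangle$ contains a copy of $\mathbb Z_p\times\mathbb Z_p$. In any case the reduction is unnecessary once the $p'$-case is handled as above.
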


\begin{proof}
Let $G$ be a counterexample of minimal order, so $|P|\ge p^2$ for $P\in\mathrm{Syl}_p(G)$; by Lemma~\ref{lem:sylow}, $P$ is noncyclic, $\text{s}_p(P)=p+1$, and $G$ has no normal subgroup of order $p$. By Proposition~\ref{prop:Sp}, $G$ is not simple (an abelian simple group has $\text{s}_p\le1$). So let $1\ne N\lneq G$ be normal.

\medskip
\emph{Case $p\nmid|N|$.} By Lemma~\ref{lem:Q}, $\text{s}_p(G/N)\in\{1,2p+1\}$. Since $P\cap N=1$, $P$ embeds in $G/N$, so $\text{s}_p(G/N)\ge\text{s}_p(P)=p+1>1$. Hence,  $\text{s}_p(G/N)=2p+1$ and $|G/N|_p=|G|_p\ge p^2$, so $G/N$ is a smaller counterexample, a contradiction.

\medskip
\emph{Case $p\mid|N|$.} By Frobenius, $\text{s}_p(N)\in\{1,p+1,2p+1\}$.
\begin{enumerate}
\item If $\text{s}_p(N)=1$, the subgroup of order $p$ of $N$ is characteristic in $N$, hence normal in $G$, a contradiction.
\item If $\text{s}_p(N)=2p+1$, then $\Omega\subseteq N$. If $|N|_p\ge p^2$, $N$ is a smaller counterexample. If $|N|_p=p$, then $|P\cap N|=p$ and $P\cap N$ contains every subgroup of order $p$ of $P$, so $\text{s}_p(P)=1$, a contradiction.
\item If $\text{s}_p(N)=p+1$, the set $\Omega_{\rm out}=\{Q\in\Omega:Q\not\le N\}$ has size $p$ and is $G$-invariant. Take $Q\in\Omega_{\rm out}$. It acts on the $p+1\equiv1\pmod p$ subgroups of order $p$ of $N$, so it normalizes one of them, $Q_1$. Then $E=QQ_1\cong\mathbb Z_p\times\mathbb Z_p$ and $E\cap N=Q_1$. The $p$ subgroups of order $p$ of $E$ other than $Q_1$ lie outside $N$, so they are exactly $\Omega_{\rm out}$; they generate $E$, so $E=\langle\Omega_{\rm out}\rangle\trianglelefteq G$. Hence $Q_1=E\cap N\trianglelefteq G$, again a contradiction.\qedhere
\end{enumerate}
\end{proof}

Next we show that a Sylow $p$-subgroup together with $\mathrm{O}_{p'}(G)$ generates a normal subgroup.

\begin{lem}\label{lem:E1}
Let $p$ be an odd prime, $G$ a group with $\text{s}_p(G)=2p+1$, $P\in\mathrm{Syl}_p(G)$, $K=\mathrm{O}_{p'}(G)$ and $\bar G=G/K$. Then $\mathrm{O}_{p'}(\bar G)=1$, $|\bar G|_p=p$, and $PK\trianglelefteq G$.
\end{lem}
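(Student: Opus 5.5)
The plan is to get the first two assertions from general facts and earlier results, and then reduce $PK\trianglelefteq G$ to showing that $\bar G$ has a unique subgroup of order $p$. That reduction is settled using Lemma~\ref{lem:Q}, a minimal normal subgroup of $\bar G$, and Theorem~\ref{simple}.

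\emph{Step 1: the first two assertions.} $\mathrm{O}_{p'}(\bar G)=1$ is the standard fact. If $L/K=\mathrm{O}_{p'}(\bar G)$, then $|L|=|L/K|\,|K|$ is prime to $p$ and $L\trianglelefteq G$, so $L\le K$ and $L/K=1$. Next, $|G|_p=p$ by Proposition~\ref{Gp}, and $p\nmid|K|$, so $|\bar G|_p=|G|_p=p$.

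\emph{Step 2: reduction.} It suffices to show that $\bar G$ has a unique subgroup of order $p$, because then $PK/K$ is that subgroup, hence normal in $\bar G$, and its preimage $PK$ is normal in $G$. By Lemma~\ref{lem:Q} with $k=2$ and $N=K$, we have $\text{s}_p(\bar G)\in\{1,2p+1\}$. So it remains to rule out $\text{s}_p(\bar G)=2p+1$.

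\emph{Step 3: ruling out $\text{s}_p(\bar G)=2p+1$.} Suppose it holds, and let $M$ be a minimal normal subgroup of $\bar G$; this exists since $p\mid|\bar G|$, so $\bar G\ne1$. Since $\mathrm{O}_{p'}(\bar G)=1$, we get $p\mid|M|$. Write $M\cong T_1\times\cdots\times T_r$ with the $T_i$ isomorphic simple groups.
\begin{itemize}
\item If $M$ is abelian, it is an elementary abelian $p$-group. Since $|\bar G|_p=p$, it has order $p$, which contradicts Lemma~\ref{lem:sylow}(ii) applied to $\bar G$ (here $k=2<p$).
\item If $M$ is nonabelian, every $T_i$ has order divisible by $p$, since they are isomorphic. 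Then $|M|_p=p$ forces $r=1$, so $M=T$ is nonabelian simple.
\end{itemize}
Since $|\bar G|_p=p$, every subgroup of order $p$ of $\bar G$ is a Sylow $p$-subgroup. These are all conjugate, and one of them lies in the normal subgroup $T$, so all of them lie in $T$. Hence $\text{s}_p(T)=2p+1$, contradicting Theorem~\ref{simple}. Therefore $\text{s}_p(\bar G)=1$, and $PK\trianglelefteq G$.

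The only substantive step is Step 3, namely excluding $\text{s}_p(\bar G)=2p+1$. It rests entirely on Theorem~\ref{simple}, with $|\bar G|_p=p$ being what forces the minimal normal subgroup to be a single simple group containing all of $\Omega(\bar G)$.
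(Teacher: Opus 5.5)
Your proposal is correct and follows essentially the same route as the paper: Lemma~\ref{lem:Q} gives $\text{s}_p(\bar G)\in\{1,2p+1\}$, a minimal normal subgroup is forced to be a single simple $T$ with $|T|_p=p$, and Theorem~\ref{simple} finishes. Where you differ, you are cleaner: Sylow conjugacy puts all of $\Omega(\bar G)$ inside $T$, so $\text{s}_p(T)=2p+1$ immediately. This replaces the paper's case split on $\text{s}_p(T)\in\{1,p+1,2p+1\}$, including its separate $QQ_1$ argument for $p+1$.
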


\begin{proof}
By Proposition~\ref{Gp}, $|G|_p=p$, so $|\bar G|_p=p$ and $\bar G\ne1$. The preimage of a normal $p'$-subgroup of $\bar G$ is a normal $p'$-subgroup of $G$, hence lies in $K$; so $\mathrm{O}_{p'}(\bar G)=1$. Let $\bar P=PK/K\in\mathrm{Syl}_p(\bar G)$; we show $\bar P\trianglelefteq\bar G$.

\medskip
Let $L$ be a minimal normal subgroup of $\bar G$. If $L$ is abelian, it is an elementary abelian $r$-group with $r=p$ (as $\mathrm{O}_{p'}(\bar G)=1$), so $|L|\le p$ and $L=\bar P\trianglelefteq\bar G$.

\medskip
Suppose $L$ is nonabelian, so $L=T^j$ with $T$ a nonabelian simple group. Then $p\mid|L|$ (otherwise $L\le\mathrm{O}_{p'}(\bar G)=1$), so $p\mid|T|$. Since $|T|_p^{\,j}=|L|_p\le|\bar G|_p=p$, we get $j=1$, $L=T$, and $|T|_p=p$. By Lemma~\ref{lem:Q} applied to the normal $p'$-subgroup $K$ of $G$, $\text{s}_p(\bar G)\in\{1,2p+1\}$. If $\text{s}_p(\bar G)=1$, $\bar P$ is the unique subgroup of order $p$, hence normal, and we are done. Assume $\text{s}_p(\bar G)=2p+1$. By Frobenius, $\text{s}_p(T)\equiv1\pmod p$ and $\text{s}_p(T)\le2p+1$, so $\text{s}_p(T)\in\{1,p+1,2p+1\}$.
\begin{enumerate}
 \item $\text{s}_p(T)=1$ is impossible: a unique subgroup of order $p$ would be a normal Sylow subgroup of the nonabelian simple group $T$.
\item $\text{s}_p(T)=2p+1$ is impossible by Theorem~\ref{simple}.
\item If $\text{s}_p(T)=p+1$, pick $Q\in\Omega(\bar G)\setminus\Omega(T)$ (possible as $2p+1>p+1$). Then $Q$ acts on $\Omega(T)$, and since $p+1\equiv1\pmod p$ it normalizes some $Q_1\in\Omega(T)$. As $Q\not\le T$, $Q\cap Q_1=1$, so $|QQ_1|=p^2$, contradicting $|\bar G|_p=p$.
\end{enumerate}

So $\text{s}_p(\bar G)=2p+1$ cannot occur, and $\bar P\trianglelefteq\bar G$, that is, $PK\trianglelefteq G$.
\end{proof}

Finally, we show that $2p+1$ is a prime power.
\begin{prop}\label{prop:primepower}
Let $p$ be an odd prime and $G$ a group with $\text{s}_p(G)=2p+1=q$. Then $q=r^a$ for a prime $r$ and some $a\ge1$.
\end{prop}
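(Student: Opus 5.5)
The plan is to reduce to the case $K=\mathrm{O}_{p'}(G)$ with $PK\trianglelefteq G$, which Lemma~\ref{lem:E1} provides, and then count the subgroups of order $p$ inside $M=PK$. By Proposition~\ref{Gp}, $|G|_p=p$, so every subgroup of order $p$ is a Sylow $p$-subgroup of $G$. All of them are conjugate to $P$ and so lie in the normal subgroup $M$. Hence $\Omega(G)=\mathrm{Syl}_p(M)$, and by the Frattini argument
\[
2p+1=|M:\mathrm{N}_M(P)|=|K:\mathrm{N}_K(P)|=|K:\mathrm{C}_K(P)|,
\]
where the last equality holds because $[\mathrm{N}_K(P),P]\le K\cap P=1$. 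Thus $P$ acts by conjugation on the $p'$-group $K$, and the action of $K$ on $\Omega$ is transitive with point stabilizer $\mathrm{C}_K(P)$.

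The key step is to show that this index is a prime power. I would pass to a minimal situation: take $G$ (equivalently $M=KP$ with $K$ a normal $p'$-subgroup and $|P|=p$) with $|K|$ minimal among groups satisfying $|K:\mathrm{C}_K(P)|=2p+1$. Choose a minimal $P$-invariant normal subgroup $N$ of $K$, or more conveniently a minimal normal subgroup of $M$ contained in $K$. Lemma~\ref{lem:Q} applied to $M/N$ gives $\text{s}_p(M/N)\in\{1,2p+1\}$. In the second case, $M/N$ is a smaller example and induction finishes. In the first case, $PN\trianglelefteq M$, so $\Omega\subseteq PN$ and $2p+1=|N:\mathrm{C}_N(P)|$; minimality then lets us replace $K$ by $N$. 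So we may assume $K=N$ is a minimal normal subgroup of $M$, hence characteristically simple: $K\cong T^j$.

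If $K$ is elementary abelian of order $r^m$, then $|K:\mathrm{C}_K(P)|$ is a power of $r$ and we are done. The remaining case, which I expect to be the main obstacle, is $K=T^j$ with $T$ nonabelian simple and $p\nmid|T|$. Here $P$ permutes the $j$ factors. If $P$ moves a factor, the factors fall into $P$-orbits of length $p$, and on each such orbit $T^p$ the centralizer of $P$ is a diagonal copy of $T$, of index $|T|^{p-1}$. This is far larger than $2p+1$, since $|T|\ge 60$. So $P$ normalizes every factor. Then $|K:\mathrm{C}_K(P)|=\prod_i|T_i:\mathrm{C}_{T_i}(P)|$, which forces $j=1$ after reducing by minimality. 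We are left with a nonabelian simple $p'$-group $T$ admitting an automorphism $\alpha$ of order $p$ with $|T:\mathrm{C}_T(\alpha)|=2p+1$. The group $T\langle\alpha\rangle$ then has exactly $2p+1$ subgroups of order $p$.

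To rule this out, I would mimic Section~\ref{sec:simple}. $T$ acts on the $2p+1$ conjugates of $\langle\alpha\rangle$ transitively with stabilizer $\mathrm{C}_T(\alpha)$, and the action is faithful since $T$ is simple. Lemma~\ref{orb} and the tournament argument of Lemma~\ref{2trans} (using an involution of $T$, which exists by Feit--Thompson) should go through with $H=\mathrm{N}_{T\langle\alpha\rangle}(\langle\alpha\rangle)$, and give $2$-transitivity. The bound of Lemma~\ref{bound} gives $|T|\le(2p+1)\cdot 2(p-1)$, since $D$ is again a $p$-group, which here meets $T$ trivially. Now $T$ is a nonabelian simple $2$-transitive group of degree $2p+1$ with $p\nmid|T|$. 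This contradicts the classification list used in Section~\ref{sec:thmB}: a $2$-transitive group of degree $n=2p+1$ has order divisible by $n(n-1)$, hence by $p$.

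In fact this last observation short-circuits the delicate part. For $2$-transitivity we need $H$ transitive on $\Omega'$, and any transitive action on $2p+1$ points whose stabilizer is transitive on the remaining $2p$ points forces $p\mid|T|$. So the work is entirely in verifying that the tournament argument applies. That is the step I would check most carefully.
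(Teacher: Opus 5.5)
Your reduction is sound, and it is a different route from the paper's. The reduction covers the minimal counterexample, $G=KP$ with $K$ a minimal normal subgroup, the elementary abelian case, and the exclusion of $P$ permuting $p$ factors. The nonabelian simple case, however, has a genuine gap: you conflate $T$ with $G=T\langle\alpha\rangle$.

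The argument of Lemma~\ref{2trans} needs the Sylow subgroup $P=\langle\alpha\rangle$ to lie in the point stabilizer, so that the stabilizer's orbits on $\Omega'$ are unions of the two $P$-orbits. Run with $H=\mathrm{N}_G(P)$, it shows only that $G$ is $2$-transitive on $\Omega$. For $G$, divisibility of $|G|$ by $n(n-1)$ is no contradiction, since $p\mid|G|$. So the sentence ``Now $T$ is a nonabelian simple $2$-transitive group'' does not follow. In fact your own divisibility remark shows that the stabilizer $\mathrm{C}_T(\alpha)$, a $p'$-group, is never transitive on $2p$ points. The tournament argument therefore cannot be run inside $T$; your ``short-circuit'' explains why the method fails for $T$, but it does not produce the contradiction.

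The missing step is short.
\begin{enumerate}
\item Since $[\mathrm{N}_T(P),P]\le T\cap P=1$, you have $\mathrm{N}_G(P)=P\times\mathrm{C}_T(\alpha)=\mathrm{C}_G(P)$.
\item $G$ acts faithfully on $\Omega$. The kernel meets $T$ in a normal subgroup of $T$ contained in $\mathrm{C}_T(\alpha)\ne T$, hence trivially. So the kernel has order $1$ or $p$, and a normal subgroup of order $p$ would force $\text{s}_p(G)=1$.
\item In the proof of Lemma~\ref{bound} the factor $|H:C|$ is therefore $1$, and $D=P$ gives $|H|\le 2p$, i.e.\ $|\mathrm{C}_T(\alpha)|\le 2$.
\item Then $|T|=(2p+1)\,|\mathrm{C}_T(\alpha)|$ is odd or twice an odd number. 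Neither is possible for a nonabelian simple group: the first by Feit--Thompson, the second because a group of order $2m$ with $m$ odd has a normal subgroup of index $2$.
\end{enumerate}

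With this repair your route works, but it is much longer than the paper's. There, $q=|K:\mathrm{C}_K(P)|$ is split into $\ell$-parts by coprime action, using $P$-invariant Sylow subgroups $K_\ell$ with $\mathrm{C}_{K_\ell}(P)\in\mathrm{Syl}_\ell(\mathrm{C}_K(P))$. Each nontrivial part is $\equiv 1\pmod p$, hence at least $p+1$, so two such parts would exceed $2p+1$. That argument needs no reduction to a characteristically simple $K$ and no further appeal to Feit--Thompson.
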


\begin{proof}
Let $P\in\mathrm{Syl}_p(G)$ and $K=\mathrm{O}_{p'}(G)$. By Proposition~\ref{Gp}, $|G|_p=p$, so every subgroup of order $p$ is Sylow.

\medskip
First, we show that $q=|K:\mathrm{C}_K(P)|$. By Lemma~\ref{lem:E1}, $PK\trianglelefteq G$, so every $Q\in\Omega$ (a conjugate of $P$) lies in $PK$. Hence $\Omega$ is exactly the set of Sylow $p$-subgroups of $PK$. Since $P\cap K=1$, $\mathrm{N}_K(P)=\mathrm{C}_K(P)$: for $g\in\mathrm{N}_K(P)$ and $y\in P$, $[g,y]=g^{-1}(y^{-1}gy)\in K$ as $K\trianglelefteq G$, and $[g,y]=(g^{-1}y^{-1}g)y\in P$ as $g$ normalizes $P$, so $[g,y]\in K\cap P=1$. By Dedekind's modular law \cite[Problem~2.9]{Isaacs94},
\[
\mathrm{N}_{PK}(P)=P\,\mathrm{N}_K(P)=P\,\mathrm{C}_K(P),
\]
of order $p\,|\mathrm{C}_K(P)|$. By Sylow's theorem,
\begin{equation}\label{eq:index}
q=\text{s}_p(G)=\frac{|PK|}{|P\mathrm{C}_K(P)|}=|K:\mathrm{C}_K(P)|.
\end{equation}
In particular $K\ne1$ since $q>1$.

\medskip
We now show that $q$ is a prime power. Since $P$ is a $p$-group acting on the $p'$-group $K$, the action is coprime. By the standard facts on coprime action (see \cite[Theorem~3.23a and Lemma~3.32]{Isaacs08}), for each prime $\ell\mid|K|$ there is a $P$-invariant Sylow $\ell$-subgroup $K_\ell$ of $K$, and $\mathrm{C}_{K_\ell}(P)=K_\ell\cap\mathrm{C}_K(P)$ is a Sylow $\ell$-subgroup of $\mathrm{C}_K(P)$. So the $\ell$-part of \eqref{eq:index} is
\[
|K_\ell:\mathrm{C}_{K_\ell}(P)|=:\ell^{a_\ell},\quad q=\prod_\ell\ell^{a_\ell}.
\]
If $a_\ell>0$, put $C_\ell=\mathrm{C}_{K_\ell}(P)$. The group $P$ acts on $K_\ell$ with fixed-point set $C_\ell$ and all other orbits of size $p$, so $p\mid|K_\ell|-|C_\ell|=|C_\ell|(\ell^{a_\ell}-1)$. As $|C_\ell|$ is a power of $\ell\ne p$, we get $p\mid\ell^{a_\ell}-1$, so $\ell^{a_\ell}\ge p+1$.

\medskip
If two distinct primes had $a_\ell>0$, then $q\ge(p+1)^2>2p+1=q$, a contradiction. Since $q>1$, exactly one prime $r$ has $a_r>0$ (and $r\ne p$), so $q=r^a$ with $a=a_r\ge1$.
\end{proof}

\section{Proof of Theorem~\ref{main}}\label{sec:proof}

In this section we determine which primes $p$ admit a prime power $2p+1$, construct examples, and combine this with the results of Sections~\ref{sec:thmB} and~\ref{sec:general}.

\medskip
We start with the arithmetic of prime powers of the form $2p+1$.

\begin{prop}\label{arith}
Let $r$ be a prime and $f\ge1$ an integer. If $p=(r^f-1)/2$ is prime, then either
\begin{enumerate}[label=(\arabic*)]
\item $f=1$ and $r=2p+1$ is an odd prime, or
\item $r=3$ and $f>1$ is an odd prime.
\end{enumerate}
\end{prop}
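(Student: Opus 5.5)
We argue by elementary number theory on $r^f=2p+1$. Since $p\ge 2$, we have $r^f\ge 5$, and $r^f$ is odd, so $r$ is an odd prime. If $f=1$ we are in case (1) directly. So assume $f>1$; the goal is to show $r=3$ and that $f$ is an odd prime.

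\emph{Step 1: $f$ is odd.} If $f=2g$, then $2p=r^{2g}-1=(r^g-1)(r^g+1)$. Both factors are even and they differ by $2$, so their product is divisible by $8$. Hence $4\mid p$, which is impossible. So $f$ is odd.

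\emph{Step 2: $f$ is prime and $r=3$.} Write
\[
2p=r^f-1=(r-1)(1+r+\cdots+r^{f-1}).
\]
The second factor is a sum of $f$ odd terms with $f$ odd, so it is odd. It also exceeds $1$ since $f>1$. Because $2p$ has only the odd divisors $1$ and $p$, we get $1+r+\cdots+r^{f-1}=p$, and therefore $r-1=2$, that is, $r=3$. Now suppose $f=ab$ with $1<a<f$. Then $3^a-1$ divides $3^f-1=2p$. Since $3^a-1$ is even and $3^a-1\ge 8$, it must equal $2p$, so $3^a=2p+1=3^f$. This contradicts $a<f$, so $f$ is prime. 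By Step 1 it is odd, giving case (2).

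\emph{Main obstacle.} There is no real difficulty here. The only care needed is in the divisor bookkeeping for $2p$. In the factorization step one must check that $r-1$ is exactly $2$, rather than $2p$. This holds because the cofactor $1+r+\cdots+r^{f-1}$ is odd and greater than $1$, so it is forced to equal $p$.
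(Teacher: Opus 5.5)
Your proof is correct and takes essentially the same route as the paper. You show $r$ is odd, rule out even $f$ via $(r^g-1)(r^g+1)$, use the odd cofactor $1+r+\cdots+r^{f-1}$ to force $r=3$, and then use $3^a-1\mid 3^f-1$ for a proper divisor $a$ of $f$ to show $f$ is prime. The differences are cosmetic: you track divisors of $2p$ instead of $p=\frac{r-1}{2}\bigl(1+\cdots+r^{f-1}\bigr)$, and you phrase the last step as a divisibility rather than an explicit factorization.
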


\begin{proof}
Since $2p=r^f-1$ is even, $r$ is odd. If $f$ were even, then $2p=(r^{f/2}-1)(r^{f/2}+1)$. However, \(r^{f/2}-1\) and \(r^{f/2}+1\) are consecutive even integers, one of which is divisible by \(4\) so $4\mid p$, a contradiction. Hence $f$ is odd and
\[
p=\frac{r-1}{2}\bigl(1+r+\cdots+r^{f-1}\bigr).
\]
The second factor is odd and at least $1$, so since $p$ is prime, one of the factors is $1$: either $f=1$ (case (1)), or $(r-1)/2=1$, that is, $r=3$ and $p=(3^f-1)/2$ with $f>1$. In the latter case, if $f=ab$ with $a,b>1$, then
\[
\frac{3^f-1}{2}=\frac{3^a-1}{2}\bigl(1+3^a+\cdots+3^{(b-1)a}\bigr)
\]
is a product of two integers strictly greater than $1$, contradicting primality. So $f$ is an odd prime.
\end{proof}

We can now prove Theorem~\ref{main}.

\begin{proof}[Proof of Theorem~\ref{main}]
Recall that $\text{m}_p(G)=2p^2-p-1$ if and only if $\text{s}_p(G)=2p+1$.

\medskip
(b)$\Leftrightarrow$(c): If $2p+1=r^f$ is a prime power, Proposition~\ref{arith} gives (c). Conversely, an odd prime or $3^f$ is a prime power.

\medskip
(a)$\Rightarrow$(b): Let $\text{s}_p(G)=2p+1$. If $p=2$, then $2p+1=5$ is prime. If $p$ is odd, Proposition~\ref{prop:primepower} shows that $2p+1$ is a prime power.

\medskip
(b)$\Rightarrow$(a): Let $q=2p+1=r^f$. Then $p\mid q-1$, so $\mathbb F_q^\times$ contains an element $\zeta$ of order $p$. Let $P=\langle\zeta\rangle\cong\mathbb Z_p$ act on $V=(\mathbb F_q,+)\cong\mathbb Z_r^f$ by multiplication, and let $G=V\rtimes P$. The action is fixed-point-free, so $\mathrm{N}_G(P)=P$, and every subgroup of order $p$ is a Sylow $p$-subgroup (as $|G|_p=p$) conjugate to $P$. Hence $\text{s}_p(G)=|G:P|=q=2p+1$ and $\text{m}_p(G)=(p-1)(2p+1)=2p^2-p-1$. (For $p=2$ this is $D_{10}$.) More generally $(V\rtimes P)\times B$ works for any group $B$ with $p\nmid|B|$.

\end{proof}

\textbf{AI statement.}
The free version of Claude assisted with literature review, proof auditing, and error correction. All mathematical arguments and their verification remain the authors’ responsibility.

\bibliographystyle{alpha}
\bibliography{Ref}

@book {Moon68,
    AUTHOR = {Moon, John W.},
     TITLE = {Topics on tournaments},
 PUBLISHER = {Holt, Rinehart and Winston, New York-Montreal, Que.-London},
      YEAR = {1968},
     PAGES = {viii+104},
   MRCLASS = {05.40},
  MRNUMBER = {256919},
MRREVIEWER = {W.\ Moser},
}

@article {Asboei26,
    AUTHOR = {Asboei, A. K.},
     TITLE = {Finite groups with few elements of prime order},
   JOURNAL = {New York J. Math.},
  FJOURNAL = {New York Journal of Mathematics},
    VOLUME = {32},
      YEAR = {2026},
     PAGES = {1344--1349},
}

@article {Frobenius95,
    AUTHOR = {Frobenius, Ferdinand G.},
     TITLE = {Verallgemeinerung des {Sylow}schen Satzes},
   JOURNAL = {Sitzungsber. K{\"o}niglich Preuss. Akad. Wiss. Berlin},
      YEAR = {1895},
     PAGES = {981--993},
      DOI = {10.3931/e-rara-18880},
}

@article {FT63,
    AUTHOR = {Feit, Walter and Thompson, John G.},
     TITLE = {Solvability of groups of odd order},
   JOURNAL = {Pacific J. Math.},
  FJOURNAL = {Pacific Journal of Mathematics},
    VOLUME = {13},
      YEAR = {1963},
     PAGES = {775--1029},
      ISSN = {0030-8730,1945-5844},
   MRCLASS = {20.40 (20.25)},
  MRNUMBER = {166261},
MRREVIEWER = {M.\ Suzuki},
       URL = {http://projecteuclid.org/euclid.pjm/1103053943},
}

@book {DM96,
    AUTHOR = {Dixon, John D. and Mortimer, Brian},
     TITLE = {Permutation groups},
    SERIES = {Graduate Texts in Mathematics},
    VOLUME = {163},
 PUBLISHER = {Springer-Verlag, New York},
      YEAR = {1996},
     PAGES = {xii+346},
      ISBN = {0-387-94599-7},
   MRCLASS = {20B05 (20-01 20B07)},
  MRNUMBER = {1409812},
MRREVIEWER = {Martin\ W.\ Liebeck},
       DOI = {10.1007/978-1-4612-0731-3},
       URL = {https://doi.org/10.1007/978-1-4612-0731-3},
}

@book {Isaacs08,
    AUTHOR = {Isaacs, I. Martin},
     TITLE = {Finite group theory},
    SERIES = {Graduate Studies in Mathematics},
    VOLUME = {92},
 PUBLISHER = {American Mathematical Society, Providence, RI},
      YEAR = {2008},
     PAGES = {xii+350},
      ISBN = {978-0-8218-4344-4},
   MRCLASS = {20Dxx (20-01)},
  MRNUMBER = {2426855},
MRREVIEWER = {Ronald\ Solomon},
       DOI = {10.1090/gsm/092},
       URL = {https://doi.org/10.1090/gsm/092},
}

@article {Kulakoff,
    AUTHOR = {Kulakoff, A.},
     TITLE = {\"Uber die {A}nzahl der eigentlichen {U}ntergruppen und der
              {E}lemente von gegebener {O}rdnung in {$p$}-{G}ruppen},
   JOURNAL = {Math. Ann.},
  FJOURNAL = {Mathematische Annalen},
    VOLUME = {104},
      YEAR = {1931},
    NUMBER = {1},
     PAGES = {778--793},
      ISSN = {0025-5831,1432-1807},
   MRCLASS = {99-04},
  MRNUMBER = {1512698},
       DOI = {10.1007/BF01457969},
       URL = {https://doi.org/10.1007/BF01457969},
}

@book {Cam99,
    AUTHOR = {Cameron, Peter J.},
     TITLE = {Permutation groups},
    SERIES = {London Mathematical Society Student Texts},
    VOLUME = {45},
 PUBLISHER = {Cambridge University Press, Cambridge},
      YEAR = {1999},
     PAGES = {x+220},
      ISBN = {0-521-65302-9; 0-521-65378-9},
   MRCLASS = {20B40},
  MRNUMBER = {1721031},
MRREVIEWER = {Miguel\ \'A.\ Borges-Trenard},
       DOI = {10.1017/CBO9780511623677},
       URL = {https://doi.org/10.1017/CBO9780511623677},
}

@book {Isaacs94,
    AUTHOR = {Isaacs, I. Martin},
     TITLE = {Algebra: a graduate course},
 PUBLISHER = {Brooks/Cole Publishing Company, Pacific Grove, CA},
      YEAR = {1994},
     PAGES = {xii+516},
      ISBN = {0-534-19002-2},
   MRNUMBER = {1276273},
}

\end{document}